\documentclass{amsart}
\usepackage{amsmath}
\usepackage{amssymb}
\usepackage{amsthm}
\usepackage{graphicx}
\usepackage{bm} 
\usepackage{mathtools}

\newtheorem{theorem}{Theorem}

\theoremstyle{definition}

\theoremstyle{remark}
\newtheorem{remark}{Remark}

\usepackage{mathrsfs}
\usepackage{subcaption}

\usepackage{doi}
\usepackage[textsize=scriptsize]{todonotes}
\usepackage{algorithmicx}
\usepackage{algpseudocode}
\usepackage{algorithm}
\usepackage{xcolor}

\newcommand{\ave}[1]{{#1}^{ave}}
\newcommand{\vex}[1]{{#1}^{vex}}

\allowdisplaybreaks

\begin{document}

\title[Reliability optimization using convex envelopes]{Exact reliability optimization for series-parallel graphs using convex envelopes}
\thanks{The research leading to these results received funding from grants ANID Fondecyt Regular 1200809 (J.B., E.M.); ANID Fondecyt Iniciaci\'on 11190515 (G.M.); ANID ANILLO ACT192094 (E.M., J.B.); Program Math Amsud 19-MATH-03 (J.B.,P.R.)}

\author{Javiera Barrera}
\address{Faculty of Engineering and Sciences,  Universidad Adolfo Ib\'a\~nez, Santiago, Chile}
\email{javiera.barrera@uai.cl}
\author{Eduardo Moreno}
\address{Faculty of Engineering and Sciences,  Universidad Adolfo Ib\'a\~nez, Santiago, Chile}
\email{eduardo.moreno@uai.cl} 
\author{Gonzalo Mu\~noz}
\address{Institute of Engineering Sciences, Universidad de O'Higgins,  Rancagua, Chile}
\email{gonzalo.munoz@uoh.cl} 
\author{Pablo Romero}
\address{Faculty of Engineering,  Universidad de la Rep\'ublica,  Montevideo, Uruguay}  
\address{Faculty of Natural and Exact Sciences,  Universidad de Buenos Aires, Argentina}
\email{promero@fing.edu.uy}

\begin{abstract}
Given its wide spectrum of applications, the classical problem of all-terminal network reliability evaluation remains a highly relevant problem in network design.

The associated optimization problem---to find a network with the best possible reliability under multiple constraints---presents an even more complex challenge, which has been  addressed in the scientific literature but usually under strong assumptions over failures probabilities and/or the network topology.

In this work, we propose a novel reliability optimization framework for network design with failures probabilities that are independent but not necessarily identical. We leverage the linear-time evaluation procedure for network reliability in the series-parallel graphs of Satyanarayana and Wood~\cite{Satya1985} to formulate the reliability optimization problem as a mixed-integer nonlinear optimization problem. To solve this nonconvex problem, we use classical convex envelopes of bilinear functions, introduce custom cutting planes, and propose a new family of convex envelopes for expressions that appear in the evaluation of network reliability. Furthermore, we exploit the refinements produced by spatial branch-and-bound to locally strengthen our convex relaxations.
Our experiments show that, using our framework, one can efficiently obtain optimal solutions in challenging instances of this problem.
\end{abstract}

\keywords{network reliability, reliability optimization, series-parallel graphs, convex envelopes, nonlinear optimization}
\maketitle

\section{Introduction}\label{intro}
Network design to optimize the reliability of a network is one of the most classic problems in the optimization literature, 
with applications in many of the aspects of our lives and certainly in many more in the future. In the recent special issue to celebrate the 50-year anniversary of the \emph{Networks} journal, a beautiful and exhaustive review by Brown et al.~\cite{brown2021network} was published, which covers the different notions, theory and applications of network reliability, along with future directions. In a similar way, P\'erez-Ros\'es~\cite{SixtyYears} revisited the 60 years of this problem, initially started by E.F. Moore and C.E Shannon~\cite{moore1956reliable} in 1956. This paper proposes a new methodology to advance in one of the most classical and defying problems in this area, the exact reliability optimization of a network.

Consider a simple undirected graph $G=(V,E)$ representing a network whose edges fail according to a random probability---these failures are assumed to be independent. We consider one of the most classical notions of reliability: the \emph{all-terminal reliability} of $G$, which is defined as the probability that $G$ remains connected. 

The exact reliability evaluation belongs to the class of $\mathcal{NP}$-hard problems~\cite{Ball1983}, even when only two terminals are required to be connected~\cite{Valiant1979} or under identical link failure probabilities~\cite{Rosenthal}. However, for certain specific families of graphs, the problem can be solved in polynomial time. In the particular case of series-parallel graphs, Satyanarayana and Wood~\cite{Satya1985} provide a set of reliability-preserving reductions to simplify the graph and its structure that make it possible to compute the reliability of a series-parallel network in linear time. This work is our starting point. This type of reduction can also be applied to general graphs to reduce the size and complexity of the problem. For example, a recent work~\cite{goharshady2020efficient} extended these results to obtain a parametrized algorithm for computing the reliability of graphs with small treewidth in linear time. Nonetheless, this approach has not been thoroughly exploited in the literature.

% A nice monograph on the combinatorics of network reliability was authored by Colbourn~\cite{Colbourn87}. Given the hardness of the reliability evaluation, several pointwise estimation techniques are available, such as recursive variance reduction or RVR~\cite{CancelaKR12}, importance sampling or IS~\cite{IS-2015} and cross-entropy method~\cite{crossentropy}. The theory of exact reliability evaluation is already mature, and exponential-time factorization methods ~\cite{SatyaChang1983} as well as sum of disjoint products~\cite{SurveyTrivedi} methods are available, among others. See~\cite{SixtyYears} for a recent review on the network reliability problem and its extensions.

As Brown et al.~\cite{brown2021network} note in their recent review, one of the most relevant and less-traveled paths is the optimization of network reliability. That is, given a limited set of resources (e.g., number of edges), how should one select a subgraph of $G$ that maximizes its reliability. Given the hardness of computing the exact reliability of a network, using this metric as the objective function for an optimization problem can be highly impractical in general. However, since the computation of the reliability in certain families of graphs is tractable and due to the sharp progress we have witnessed in mixed-integer nonlinear optimization technology, an efficient method for reliability optimization for particular classes of graphs is plausible.

%Topological optimization problems meeting reliability constraints or reliability maximization under budget constraints -- known as network synthesis--  are also found  in the scientific literature. 
A survey on reliability optimization is provided in~\cite{Boesch2009}. Most previous works have assumed either identical link costs or independent link failures with identical probability; in these cases, the optimal topologies are highly symmetrical~\cite{zia2010redundancy}. A foundational work on this problem for the all-terminal reliability problem was carried out by Boesch~\cite{BoeschHello}. In this work, a fixed budget is considered under identical costs (i.e., the network has a precise number of links), and identical independent link failures with probability $\rho$ are assumed. The goal is to design a graph whose  all-terminal reliability is maximized in a uniform sense, i.e., over the whole compact set $\rho \in [0,1]$,  among all the graphs with the same number of nodes and links. These graphs are known as \emph{uniformly most-reliable graphs} or UMRGs. It is known that UMRGs must have the maximum tree number and maximum connectivity, providing evidence of the symmetry of the optimal graphs under the strong assumptions of independence and identical costs. Nevertheless, several historical conjectures regarding the construction and existence of UMRGs remain open~\cite{UMRG2019}. In the restricted class of series-parallel graphs, their UMRGs are characterized in~\cite{neufeld1985most}. 

One way of tackling the difficulties associated with exact reliability computation is to use meta-heuristics to provide a \emph{good} solution for the problem; this approach was highly studied during the 1990s~\cite{aboelfotoh2001neural,dengiz1997efficient,deeter1998economic}. Later work on this subject includes hybrid ACO~\cite{dengiz2010design} and self-tuning heuristics~\cite{dengiz2015self}. More recently, \cite{ozkan2020reliable} combines heuristic techniques with branch-and-bound methods. However, the main drawback of these techniques is that heuristics cannot provide guarantees on the optimality of  their solutions. A different approach to overcome this problem is to use simulations of the failure process and embed these sampled scenarios into an optimization model, known as the sample average approximation method~\cite{kleywegt2002sample}. This approach allows one to consider failure models without assuming independent or identical failures probabilities but generates an approximated formulation.  
For example, \cite{SAA} provides a powerful methodology to minimize the cost of a network meeting two-terminal reliability constraints, which considers probabilistic  cuts in the branch-and-bound tree. Similarly, \cite{TopologicalRel2015} presents a reliability optimization model with dependent link failures ruled by the Marshall-Olkin copula~\cite{MarshalOlkinParameters,MarshalOlkin2}. %, and their results suggest that suboptimal solutions could be found if dependent failures are ignored in the model. A parametric estimation of the Marshal-Olkin copula was recently presented in~\cite{MarshalOlkinParameters,MarshalOlkin2} that jointly with~\cite{TopologicalRel2015} showed a promising research direction toward the development of realistic models. 
In \cite{TOP}, a stochastic network flow model using scenarios for the two-terminal reliability case is provided.

Since the nonlinear functions representing the reliability of the problem are neither convex nor concave functions, straightforward mathematical optimization formulations seem to be of little use in reliability optimization problems. Nonetheless, one can rely on convexification techniques to approximate the problem with a tractable alternative and exploit spatial branch-and-bound to further refine such approximations. This is a key component of state-of-the-art mixed-integer nonlinear programming (MINLP) technology and, to the best of our knowledge, it has not been exploited for the general network reliability problem. For example, convex envelopes of functions can be efficiently computed under special cases, which yield strong convex relaxations. This idea is used in~\cite{ye2018mixed} to optimize the design of a reliable chemical plant, which can be represented as a series graph. Similar techniques have been used for  other network-related problems, for example, on AC optimal power flow problems~\cite{bynum2018strengthened}.

Our paper follows this previous idea. We consider the optimization reliability problem with independent failure but with different failure probability among edges. In this setting, the reduction techniques remain valid~\cite{Satya1985}.  
Based on these reliability-preserving serial-parallel reductions, a convex MINLP formulation for the reliability optimization problem is obtained for series-parallel graphs. In this formulation, each reduction generates new constraints (a linear number). We provide tight convex envelopes for the functions appearing from the reduction process, which, combined with the refinements carried out in spatial branch-and-bound trees, allows us to obtain the (exact) optimal solution efficiently. This idea could also be extended to other families of graphs: for instance, one can rely on series-parallel reductions to decrease the size of a problem.

This article is organized as follows. Section~\ref{sec:concepts} presents 
series and parallel reductions that preserve all-terminal reliability, following the work 
of Satyanarayana and Wood~\cite{Satya1985}. Section~\ref{sec:main} presents the main contributions of this work. 
Specifically, a nonlinear and nonconvex formulation of the reliability optimization problem is introduced in Subsection~\ref{basic}, considering series-parallel reductions. Subsection~\ref{sec:envelopes} introduces convex envelopes associated with the series-parallel reductions, using classical McCormick envelopes \cite{mccormick1976computability} and a novel envelope for series-type reductions (Theorem~\ref{thm:main}). 
%The resulting problem falls within the framework of convex optimization.
Further improvements to the resulting convex optimization problem are addressed in Subsection~\ref{improvements}. The computational effectiveness of our proposal is studied in Section~\ref{sec:results}. 
Finally, Section~\ref{sec:conclusions} presents concluding remarks and directions for future work.

\section{Definitions and reliability-preserving reductions}\label{sec:concepts}
Consider an undirected graph $G=(V,E)$. Nodes are perfectly reliable, but links may fail with independent probabilities $q_e$ for $e=1,\ldots |E|$. Let us 
denote by $p_e=1-q_e$ the elementary reliability of the link $e$. We denote by $\mathcal{R}_G(p)$ the \emph{all-terminal reliability} of graph $G$: the probability that $G$ is connected. 
%That is, the probability that for any pair of nodes $u,v\in V$, there exists a path connecting these nodes. 

Given two graphs $G_1$ and $G_2$ with two distinguished vertices $s(G_i)$ and $t(G_i)$, a \emph{series composition} of $G_1$ and $G_2$, denoted by $G_1 +_S G_2$, is the disjoint union of both graphs, merging $t(G_1)$ with $s(G_2)$. In this case, $s(G_1 +_S G_2) = s(G_1)$ and  $t(G_1 +_S G_2) = t(G_2)$. Similarly, a \emph{parallel composition}  $G_1 +_P G_2$ is the disjoint union of both graphs, merging  $s(G_1)$ with $s(G_2)$ (thus $s(G_1) = s(G_2) = s(G_1 +_S G_2) $ and merging $t(G_1)$ with $t(G_2)$ (thus $t(G_1)=t(G_2)=t(G_1 +_P G_2)$). 

A graph $G$ is a \emph{series-parallel graph} if it can be obtained from a sequence of series-parallel compositions starting from the single edges $G_e=\{e\}$ for $e= 1\ldots |E|$. Formally, let $\mathcal{G}_0=\{ G_e : e=1\ldots |E|\}$ be the set of single edges. Iteratively, we construct $\mathcal{G}_i = \mathcal{G}_{i-1} \bigcup (G_{|E|+i} \setminus (G_j \cup G_k))$ where $G_{|E|+i}=G_j+_\odot G_k$ is either a series or parallel composition of graphs $G_j$ and $G_k$ in $\mathcal{G}_{i-1}$. Note that the number of connected components in $\mathcal{G}_i$ is $|E|-i$ because this number decreases by one in each iteration. Therefore, $\mathcal{G}_{|E|-1}$ contains only one element, which is a connected series-parallel graph.  
We denote by $\mathcal{S}_G = \left[G_{|E|+i}=G_j+_\odot G_k\right]_{i=1}^{|E|-1}$ this sequence of series-parallel compositions to construct $G$. Note that this sequence is not unique for a given graph $G$.

%A graph $G$ is a \emph{series-parallel graph} if it can be obtained from a sequence of series-parallel compositions $\mathcal{S}_G = \left[G_i=G_j+_\odot G_k\right]_{i=1}^{|E|}$ starting from the single edges $G_e=\{e\}$ for $e= 1\ldots |E|$ and replacing subgraphs $G_j$ and $G_k$ by the new constructed subgraph $G_i$ at each step.  If $G$ is connected, then $\mathcal{S}_G$ has exactly $|E|-1$ series or parallel compositions. Note that this sequence of composition is not unique for a given graph $G$.

Satyanarayana and Wood~\cite{Satya1985} presented a set of reliability-preserving transformations for computing the reliability of series-parallel graphs based on a sequence of series-parallel compositions.%\todo{GM:Acá (ver siguiente todo)}
These transformations based on series-parallel compositions can also be used in general graphs to reduce their size in their reliability computation.

When two edges $e_j,e_k$  are \emph{in parallel}, with reliabilities $p_j,p_k$, these edges can be replaced by a new single edge $e_i$ with reliability $p_i = 1-(1-p_j)(1-p_k)$, which is the probability that at most one of these two links  fails.

If two edges $e_j,e_k$ are \emph{in series}, with reliabilities $p_j,p_k$, at least one of them must remain operational to keep the graph connected. Thus, if we replace these two edges with a new edge $e_i$, the reliability of this edge must consider this event. Let $\mathcal{A}$ be the event that $e_j$ and $e_j$ do not fail simultaneously. Hence, the reliability of the graph satisfies
\begin{align*}
\mathcal{R}_G &= \mathbb{P}[G \text{ is connected} | \mathcal{A}]\cdot \mathbb{P}[\mathcal{A}] + \underbrace{\mathbb{P}[G \text{ is connected} | \mathcal{A}^C]}_{0} \cdot \mathbb{P}[\mathcal{A}^C] \\
&= \mathbb{P}[G \text{ is connected} | \mathcal{A}]\cdot \mathbb{P}[\mathcal{A}] %=  \mathcal{R}_{G^{\prime}}\cdot \Omega_i ,  
\end{align*}
Therefore, in the case of a series reduction replacing edges $e_j$ and $e_k$ with a new edge $e_i$, the reliability of the resulting graph must be multiplied by $\mathbb{P}[\mathcal{A}] = 1-(1-p_j)(1-p_k)$, and the reliability of the new edge $e_i$ is the probability that both edges are operational conditional to the event that at least one of them remains operational, that is $p_j \cdot p_k$ normalized by the probability of $\mathcal{A}$:
$$p_i = \frac{p_j p_k}{1-(1-p_j)(1-p_k)}.$$

% Hence, we normalize the final reliability by this event. Therefore, we can replace these edges by a new edge $e_i$ with reliability 
% %$$p_i = p_j p_k / (1-(1-p_j)(1-p_k))$$
% $$p_i = \frac{p_j p_k}{1-(1-p_j)(1-p_k)}$$
% That is, the probability that both edges $e_j$ and $e_k$ do not fail ($p_j\cdot p_k$) divided by the probability that at least one of them does not fail ($1-(1-p_j)(1-p_k)$). Additionally to this last transformation, we need to apply a normalization factor to the resulting graph reliability, as $p_i$ is a conditional probability.
%
% If we let $G^{\prime}$ be the graph obtained after replacing $e_j$ and $e_k$ with $e_i$ as above, its reliability $\mathcal{R}_{G^{\prime}}$ must be multiplied by $\Omega_i=1-(1-p_j)(1-p_k)$. This is explained because the reliability $R_G$ can be conditioned by the event $\mathcal{A}$ such that $p_j$ and $p_k$ do not fail simultaneously to obtain:
% \begin{align*}
% \mathcal{R}_G &= \mathbb{P}[G \text{ is connected} | \mathcal{A}]\cdot \mathbb{P}[\mathcal{A}] + \underbrace{\mathbb{P}[G \text{ is connected} | \mathcal{A}^C]}_{0} \cdot \mathbb{P}[\mathcal{A}^C] \\
% &= \mathbb{P}[G \text{ is connected} | \mathcal{A}]\cdot \mathbb{P}[\mathcal{A}] =  \mathcal{R}_{G^{\prime}}\cdot \Omega_i ,  
% \end{align*}
% where it is used the fact that the resulting graph is never connected under the event $\mathcal{A}^C$, 
% and $\mathbb{P}[\mathcal{A}] = \Omega_i$.

Finally, note that the sequence $\mathcal{S}_G$ that \emph{constructs} the graph $G$ can be used to compute the reliability of $G$. That is, if $G_{|E|+i}=G_j+_\odot G_k$ and $G_j$ and $G_k$ corresponds to an edge of $E$, then $G_{|E|+i}$ is a new edge that replaces the original two edges with a new edge representing its series/parallel composition. Therefore, applying this sequence iteratively, at each step of the sequence $\mathcal{S}_G$, the subgraphs $G_j,G_k$ in a composition are edges. %\todo{GM: No les parece mover este párrafo completo más arriba? Partir diciendo que la reducción se aplica a la secuencia que se usa para crear $G$. Lo pondría donde dejé el todo arriba.}

We formalize the reliability computation of a series-parallel graph $G$ in Algorithm~\ref{alg:computeReliability}.
\begin{algorithm}
\caption{Compute the reliability of a series-parallel graph $G$ from its composition sequence $\mathcal{S}_G$.} \label{alg:computeReliability}
\begin{algorithmic}
\Require Composition sequence $\mathcal{S}_G = \left[G_{|E|+i}=G_j+_\odot G_k\right]_{i=1}^{|E|-1}$
\Require $p_e$ reliabilities of edges $e\in 1\ldots |E|$
\For {$i=1\ldots |E|$}
\State $G_i \gets \{i\}$
\State $Y_i \gets p_i$
\State $\Omega_i \gets 1$
\EndFor
\For {$i=1\ldots |E|-1$}
\If{$G_{|E|+i}=G_j+_P G_k$} 
\State $Y_{|E|+i} \gets 1-(1-Y_j)(1-Y_k)$
\State $\Omega_{|E|+i} \gets 1$
\ElsIf{$G_{|E|+i}=G_j+_S G_k$}
\State $Y_{|E|+i} \gets \frac{Y_j Y_k}{1-(1-Y_j)(1-Y_k)} $
\State $\Omega_{|E|+i} \gets 1-(1-Y_j)(1-Y_k)$
\EndIf
\EndFor
\State \Return $\mathcal{R}_G := Y_{2|E|-1} \cdot \prod_{i=1}^{2|E|-1} \Omega_i$
\end{algorithmic}
\end{algorithm}
In other words, $Y_i$ represents the reliability of the edge $i$ for $i\leq |E|$, or the reliability of the edge resulting from the reduction $G_{|E|+i}=G_j+_\odot G_k$ for $i> |E|$. Similarly, $\Omega_{|E|+i}$ represents the reliability factor from the reduction $G_{|E|+i}$, which is either $\Omega_{|E|+i}=1$ (parallel composition) or $\Omega_{|E|+i}=1-(1-p_j)(1-p_k)$ (series composition). When the graph has been reduced to a single edge, the reliability of $G$ is equal to the reliability of this edge ($Y_{2|E|-1}$) multiplied by all the factors $\Omega_i$.  Algorithm~\ref{alg:computeReliability} allows one to compute the all-terminal reliability of $G$ in linear time.
%Therefore, if $G$ is a series-parallel graph, by applying these reductions over the sequence of series-parallel compositions $\mathcal{S}_G$, we can compute the all-terminal reliability of $G$ in linear time.

\section{Optimizing the reliability of a series-parallel graph}\label{sec:main}
\subsection{A nonlinear optimization model for a series-parallel graph}\label{basic}
The aforementioned results provide a procedure to compute the resulting reliability $\mathcal{R}_G$ of a series-parallel graph $G$. Our main interest is in studying the network design problem of selecting the subgraph that maximizes reliability given a set of constraints.
Specifically, given a graph $G=(V,E)$, we are interested in the selection of a subset of edges $F\subseteq E$ satisfying the given constraints such that the reliability of the graph $(V,F)$ is maximized. 

We now proceed to formulate this problem as an MINLP. Let $X_e\in \{0,1\}$ for $e\in E$ be binary variables indicating whether $e\in F$, and let $A X \leq b$ be a given set of the arbitrary linear constraints that any valid $X$ must satisfy. These can be, for example, an upper bound on the number of edges to be considered.
Let $\mathcal{R}(p)$ be the reliability of network $G$ given the probability vector $p\in[0,1]^E$---each component $p_e$ is the reliability of $e\in E$. We note that if a link $e\in E$ is not considered in $F$, this is equivalent to assuming that its elementary reliability is 0. Then, we can formulate our problem as:
\begin{align*}
\max &\ \mathcal{R}(p_1X_1,p_2X_2,\ldots ,p_{|E|} X_{|E|})\\
s.t. \, \, & A X \leq b \\
& X_{e} \in \{0,1\} \quad \forall e\in 1\ldots |E|
\end{align*}

If $G$ is a series-parallel graph, we can apply the reliability-preserving reductions over the composition sequence $\mathcal{S}_G = \left[G_{|E|+i}=G_j+_\odot G_k\right]_{i=1}^{|E|-1}$ that constructs $G$. Following the idea and notation behind Algorithm~\ref{alg:computeReliability}, we define the continuous variables $Y_i\in[0,1]$ and  $\Omega_i\in[0,1]$ for each $i=1\ldots 2|E|-1$ to represent the reliability and the correction factor of each step of the sequence. Additionally, we define the continuous variables $\bar\Omega_i\in[0,1]$ to represent the product of the correction factors. Using these variables, we can formulate the problem as follows:

\begin{subequations}\label{eq:mainmodel}
\begin{align}
	\max\ & R &\label{eq:original_start}\\
 A X &\leq b &  \label{eq:sideConstraints}\\
 Y_i &= p_i X_i &  & & i=1\ldots |E| \label{eq1:original}\\
\Omega_i &= 1 & & & i=1\ldots |E| \label{eq1:original2}\\
 Y_{|E|+i} &= 1-(1-Y_j)(1-Y_k) &  & &    i=1\ldots |E|-1 : G_{|E|+i} = G_j +_P G_k\label{eq1:parallel}\\
 \Omega_{|E|+i} &= 1 &  & &  i=1\ldots |E|-1 : G_{|E|+i} = G_j +_P G_k\label{eq1:parallel2}\\
 Y_{|E|+i} &= \frac{Y_j Y_k}{1-(1-Y_j)(1-Y_k)} & & &   i=1\ldots |E|-1 : G_{|E|+i} = G_j +_S G_k \label{eq1:series}\\
 \Omega_{|E|+i} &= 1-(1-Y_j)(1-Y_k) & & &   i=1\ldots |E|-1 : G_{|E|+i} = G_j +_S G_k \label{eq1:series2}\\
 \bar{\Omega}_1 &= \Omega_1 & &  &  i=2\ldots 2|E|-1 \label{eq1:cumulative}\\
 \bar{\Omega}_i &= \bar{\Omega}_{i-1} \cdot \Omega_i & & & i=2\ldots 2|E|-1 \label{eq1:cumulative2}\\
 R &= Y_{2|E|-1} \cdot \bar{\Omega}_{2|E|-1} & \label{eq1:objetive}\\
 Y_i, \Omega_i, \bar{\Omega}_i&\in[0,1]  & & &  i=1\ldots 2|E|-1\\ 
 X_i &\in \{0,1\}  && & i=1\ldots |E|\label{eq:original_finish}
\end{align}
\end{subequations}

Constraints~\eqref{eq1:original}-\eqref{eq1:original2}  correspond to the main decision variables, indicating whether an edge $e$ is considered in the subgraph $F$. In the latter case, edge $e$ has elementary reliability equal to zero. Constraints~\eqref{eq1:parallel}-\eqref{eq1:parallel2} model a parallel reduction $G_{|E|+i} = G_j +_P G_k$, in which case the new edge $i$ has reliability $1-(1-p_i)(1-p_k)$ and there is no reliability correction factor ($\Omega_{|E|+i}=1$). Constraints~\eqref{eq1:series}-\eqref{eq1:series2} model a series reduction, where the new edge has reliability $\frac{p_j p_k}{1-(1-p_j)(1-p_k)}$ and the reliability correction factor is $\Omega_{|E|+i}=1-(1-p_j)(1-p_k)$. Constraints~\eqref{eq1:cumulative}-\eqref{eq1:cumulative2} ensure that $\bar{\Omega}_i$ is equal to the cumulative product of the factors $\Omega_i$, that is, $\bar{\Omega}_i = \prod_{j<i} \Omega_j$. Finally, constraint~\eqref{eq1:objetive} provides the reliability of the graph $R$, which is the operational probability after the last reduction, i.e., when $G$ has been reduced to a single edge. 

Note that this problem can be seen as a \emph{mixed-integer quadratically constrained program} (MIQCP); the left side of constraint~\eqref{eq1:series} can also be written as $Y_i\cdot \Omega_i = Y_j\cdot  Y_k$. However, all quadratic constraints are nonconvex, and thus this model can be challenging for most of the current nonlinear optimization solvers.

\subsection{Convex envelopes for the problem}\label{sec:envelopes}

The main issue with model \eqref{eq:mainmodel} is that the resulting constraints involve nonconvex and nonconcave functions. In fact, the three bivariate functions $f_1(x,y)=xy$, $f_2(x,y) = 1-(1-x)(1-y)$ and $f_3(x,y)=\tfrac{xy}{1-(1-x)(1-y)}$ appearing in \eqref{eq:mainmodel} are neither convex nor concave functions\footnote{Without loss of generality, we assume that $f_3(0,0) = \lim_{(x,y)\rightarrow (0,0)} f_3(x,y) = 0$.}. 
%Hence, this model is not suitable for most of the non-linear optimization solvers. 

One common approach for generating a (possibly strong) convex relaxation is to use the \emph{envelopes} of these functions. The concave envelope of $f(x,y)$ over a given domain $D$ is the smallest concave overestimator $\ave{f}(x,y)\geq f(x,y)$ for all $(x,y)\in D$ and can be used to relax a constraint of type $f(x,y)\geq z$ with a convex constraint $\ave{f}(x,y)\geq z$. Similarly, the convex envelope of $f(x,y)$ is the largest convex underestimator $\vex{f}(x,y)\leq f(x,y)$ for all $(x,y)\in D$ and can be used to relax a constraint $f(x,y)\leq z$. 
In our setting, this implies that we can relax an equality constraint $z=f_i(x,y)$ with two convex constraints $ \vex{f}_i(x,y) \leq z \leq \ave{f}_i(x,y)$.

Since our optimization problem only considers equality constraints, in principle, we should aim at computing both convex and concave envelopes. However, due to the structure of our problem, constraints $\vex{f}_i(x,y) \leq z$ are not necessary.
We show below that $\ave{f}_i(x,y)$ are all increasing functions in both variables, and considering that we are maximizing reliability, along with the simple structure of our constraints, $z \leq \ave{f}_i(x,y)$ will always be active in an optimal solution of the resulting convex relaxation. This is expected, as $f_1(x,y), f_2(x,y)$ and $f_3(x,y)$ are increasing functions in both variables in the square $[0,1]\times[0,1]$.
% since we are maximizing the reliability, this implies that the convex envelopes do not play a significant role and thus we focus on the concave envelopes of these functions. 
% \todo[inline]{¿Decir tambien que $f^{AVE}$ es creciente?} 

For the case of $f_1(x,y)=x\cdot y$, its envelopes are well-known and can be obtained by the McCormick envelopes~\cite{mccormick1976computability}. Let us assume that $x,y\in [0,1]$, and let $L_x,L_y$ and $U_x,U_y$ be lower and upper bounds for $x$ and $y$; then, the concave envelope of $f_1(x,y)$ is given by:
\begin{equation}
	f_1(x,y) = x\cdot y \leq \begin{cases}
	U_x \cdot y + x\cdot L_y - U_x\cdot L_y & \text{if } y - L_y \geq \frac{U_y-L_y}{U_x-L_x} \cdot (x - L_x) \\
	x \cdot U_y + L_x\cdot y - L_x \cdot U_y & \text{if not} \end{cases}
\end{equation}
This envelope is increasing in both variables when the variable bounds are nonnegative. 

For the case of $f_2(x,y)=1-(1-x)(1-y) = x + y - x\cdot y$, its concave envelope can be obtained using the convex envelope of $f_1(x,y)$, resulting in the following piecewise linear function:
\begin{equation}
	f_2(x,y) = 1-(1-x)(1-y) \leq \begin{cases}
	x \cdot (1-L_y) + y\cdot (1-L_x) + L_x L_y & \text{if }U_y - y \geq \frac{U_y-L_y}{U_x-L_x} \cdot (x - L_x)\\
	x \cdot (1-U_y) + y\cdot (1-U_x) + U_x U_y & \text{if not} \end{cases}
\end{equation}
Since in our case both variable bounds are less than 1, this envelope is also increasing in both variables.

For the case of $f_3(x,y)=\tfrac{x\cdot y}{x+y-xy}$, an explicit formula for its concave envelopes is not known. Here, we provide its concave envelope for the case of $L_x=L_y=0$.

\begin{theorem}\label{thm:main}
The concave envelope of $f_3(x,y)=\tfrac{x\cdot y}{x+y-xy}$ for $(x,y)\in[0, U_x]\times [0, U_y]$ for $U_x,U_y\leq 1$ is given by:
\begin{equation}
	\ave{f}_3(x,y) = \begin{cases} 
\frac{x\cdot y}{x+y-U_x\cdot y} & \text{if } x/U_x \geq y/U_y \\
\frac{x\cdot y}{x+y-x\cdot U_y} & \text{if not} \end{cases}
\end{equation}
\end{theorem}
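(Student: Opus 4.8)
The plan is to prove the identity in three movements: first recast the piecewise formula as a pointwise minimum, then verify it is a concave overestimator of $f_3$, and finally certify minimality via an explicit convex-combination (ruled-surface) representation. Throughout write $g_1(x,y) = \tfrac{xy}{x+y-U_x y}$ and $g_2(x,y) = \tfrac{xy}{x+y-xU_y}$ for the two branches, so that the claim is $\ave{f}_3 = \min(g_1,g_2)$.

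First I would show that, on $[0,U_x]\times[0,U_y]$, the stated case split coincides with a minimum: since both denominators are positive off the origin and share the common positive numerator $xy$, one has $g_1\le g_2 \iff x+y-U_xy \ge x+y-xU_y \iff U_x y \le x U_y \iff x/U_x \ge y/U_y$. Hence $\ave{f}_3 = \min(g_1,g_2)$, with $g_1$ used exactly on $\{x/U_x\ge y/U_y\}$, where it moreover matches $f_3$ on the edge $x=U_x$, namely $g_1(U_x,y)=f_3(U_x,y)$ (and symmetrically $g_2(x,U_y)=f_3(x,U_y)$).

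Next I would establish that $g=\min(g_1,g_2)$ is concave and dominates $f_3$. For concavity it suffices to show each branch is concave on the whole rectangle, because the pointwise minimum of concave functions is concave. Writing $g_1 = \tfrac{xy}{x + (1-U_x)y}$ and computing the Hessian, a direct calculation yields a matrix equal to $\tfrac{2(1-U_x)}{v^3}\bigl(\begin{smallmatrix} -y^2 & xy \\ xy & -x^2\end{smallmatrix}\bigr)$, where $v = x+(1-U_x)y>0$; the inner matrix has vanishing determinant and trace $-(x^2+y^2)\le 0$, so the Hessian is negative semidefinite and $g_1$ is concave. Here $U_x\le 1$ is exactly what makes the prefactor nonnegative, and the same argument applies to $g_2$. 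For domination, in the region where $g=g_1$ we have $x\le U_x$, hence $x+y-U_xy \le x+y-xy$; since the numerators coincide, $g_1\ge f_3$, and symmetrically $g_2 \ge f_3$ in the other region. Thus $g$ is concave with $g\ge f_3$, which already gives $g \ge \ave{f}_3$ by minimality of the concave envelope.

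For the reverse inequality $g\le \ave{f}_3$ I would exploit that each $g_i$ is positively homogeneous of degree one. Along the ray from the origin through $(x,y)$ the active branch is linear and interpolates between $f_3(0,0)=0$ and the value of $f_3$ where the ray exits a top edge. Concretely, for $x/U_x\ge y/U_y$ one checks $g(x,y) = t\, f_3(U_x,\, yU_x/x)$ with $t=x/U_x\in[0,1]$ and $yU_x/x\le U_y$, so $(x,y,g(x,y))$ is the convex combination $(1-t)(0,0,f_3(0,0)) + t\,(U_x,\, yU_x/x,\, f_3(U_x,\,yU_x/x))$ of two points on the graph of $f_3$. Evaluating the concave $\ave{f}_3$ at this convex combination and using $\ave{f}_3 \ge f_3$ gives $\ave{f}_3(x,y) \ge g(x,y)$; the symmetric computation covers $x/U_x\le y/U_y$. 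Combining the two inequalities yields $g=\ave{f}_3$.

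I expect the concavity verification to be the main obstacle in appearance, but the genuine crux is the minimality argument, where one must identify the correct convex combination. The degree-one homogeneity of $g_1,g_2$ is the structural fact that makes the ruled-surface decomposition work, reducing minimality to the boundary identity $g_i = f_3$ on the top edges. Care is needed only with the degenerate behavior near the origin and when $U_x$ or $U_y$ equals $1$ (where a denominator can vanish on an edge), which are handled by the continuity convention $f_3(0,0)=0$.
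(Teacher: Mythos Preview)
Your proposal is correct and follows essentially the same route as the paper: recast the formula as $\min(g_1,g_2)$, verify concavity of each branch via the Hessian computation (identical to the paper's), check domination of $f_3$, and prove minimality by showing that along every ray through the origin the candidate equals the chord joining $(0,0,0)$ to the boundary value of $f_3$. Your minimality step is in fact marginally cleaner than the paper's: you argue directly from the concavity of $\ave{f}_3$ and the convex-combination identity, whereas the paper additionally verifies that $f_3$ is convex along rays---a fact that is not strictly needed once $g\ge f_3$ has been established independently.
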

\begin{proof}
It is easy to see that $\ave{f}_3(x,y) \geq f_3(x,y)$. On the other hand, note that for  $(x,y)\in [0,U_x]\times[0,U_y]$,
$$\ave{f}_3(x,y)= \min\left\{\frac{x\cdot y}{x+y- U_x y}, \frac{x\cdot y}{x+y-x U_y}\right\}$$
The Hessian matrix of  $\frac{x\cdot y}{x+y- U_x y}$ is: 
\[ \mathbf{H} \left(\frac{x\cdot y}{x+y-U_x\cdot y}\right) = \frac{2 (1-U_x)}{(x+y-U_x\cdot y)^3} \begin{bmatrix} -y^2 & x y \\ x y & -x^2 \end{bmatrix} \]
which is a negative semidefinite-matrix, and thus it is a concave function in $ [0,U_x]\times[0,U_y]$. The same result can be obtained for $\frac{x\cdot y}{x+y-x\cdot U_y}$ by exchanging $x$ and $y$.
%A similar proof can be obtained for $\frac{x\cdot y}{x+y-x\cdot U_y}$ by interchanging $x$ and $y$. 
Finally, since $\ave{f}_3(x,y)$ is the minimum of these two concave functions, we conclude that $\ave{f}_3(x,y)$ is concave over $[0,U_x]\times[0,U_y]$. 

Finally, we need to show that $\ave{f}_3(x,y)$ is the \emph{smallest} concave overestimator of $f_3(x,y)$. We note that if $y=\lambda x$, then the function $f_3(x,\lambda x) = \tfrac{\lambda x}{1+\lambda-\lambda x}$ is a convex function on $x$. In fact, $\tfrac{\partial f_3(x,\lambda x)}{\partial x} = 2\lambda^2(1+\lambda) / (1+\lambda-\lambda x)^3$, which is positive for any $\lambda>0$. Therefore, the best possible overestimator over the line $y=\lambda x$ is given by the linear function interpolating the origin and the intersection of $(x,\lambda x)$ with either $x=U_x$ or $y=U_y$. We show that $\ave{f}_3(x,y)$ is a function satisfying this condition. In fact, if $U_y/U_x \geq \lambda$, then $y=\lambda x$ intersects first with $x=U_x$  and $\ave{f}_3(x,\lambda x) = \tfrac{\lambda x}{1+\lambda - \lambda U_x}$, which is a linear function, and $f_3(U_x,\lambda U_x) = \ave{f}_3(U_x,\lambda U_x)$. Otherwise, if $U_y/U_x \leq \lambda$, then $y=\lambda x$ intersects first with $y=U_y$  and $\ave{f}_3(x,\lambda x) = \tfrac{\lambda x}{1+\lambda - U_y}$, which is a linear function such that $f_3(U_y/\lambda, U_y) = \ave{f}_3(U_y/\lambda, U_y)$.
\qed\end{proof}

This idea of exploiting the convexity of $f_3$ over the rays $f_3(x,\lambda x)$ can also be extended to find concave envelopes for other functions satisfying this property; this is further elaborated in parallel work~\cite{barrera2021convex}. Note that, as anticipated, $\ave{f}_3(x,y)$ is an increasing function in both variables in $[0,U_x]\times[0,U_y]$; it can be easily verified that
\[\nabla \left(\frac{x\cdot y}{x+y-U_x\cdot y}\right) =\frac{1}{(-U_x y+x+y)^2} \left((1-U_x) y^2,x^2\right) \]
which is a nonnegative vector whenever $U_x\leq 1$. The other part of the definition of $\ave{f}_3(x,y)$ can be verified similarly.

In Figure \ref{fig:threeave}, we show the three concave envelopes we have discussed in this section.
Using these envelopes, we can formulate a mixed-integer convex nonlinear problem that can be solved more efficiently than the original model. This provides a tractable overestimation of the reliability of the resulting graph.

\begin{figure}[tbp]
\centering
	\includegraphics[height=3.5cm]{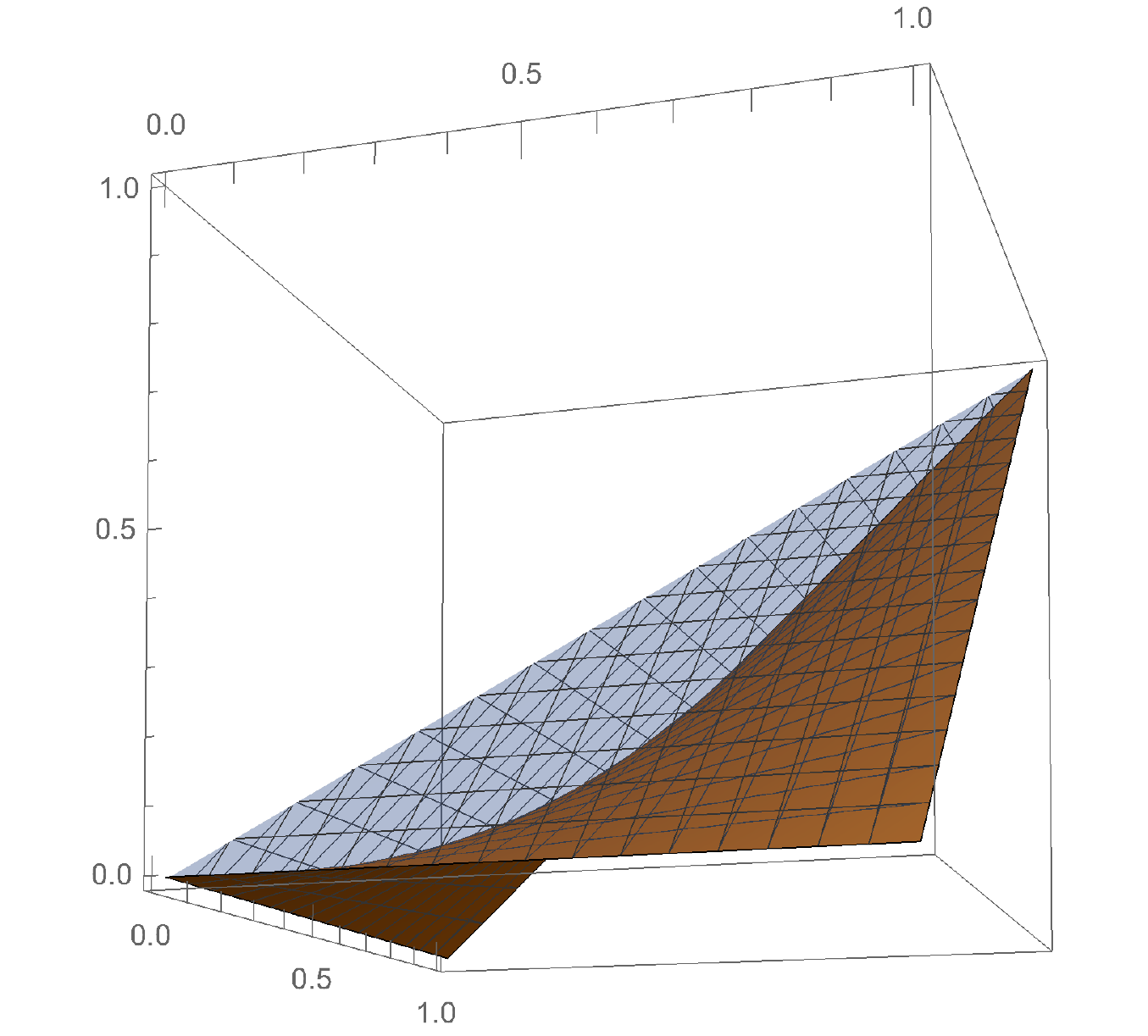}%
	\includegraphics[height=3.5cm]{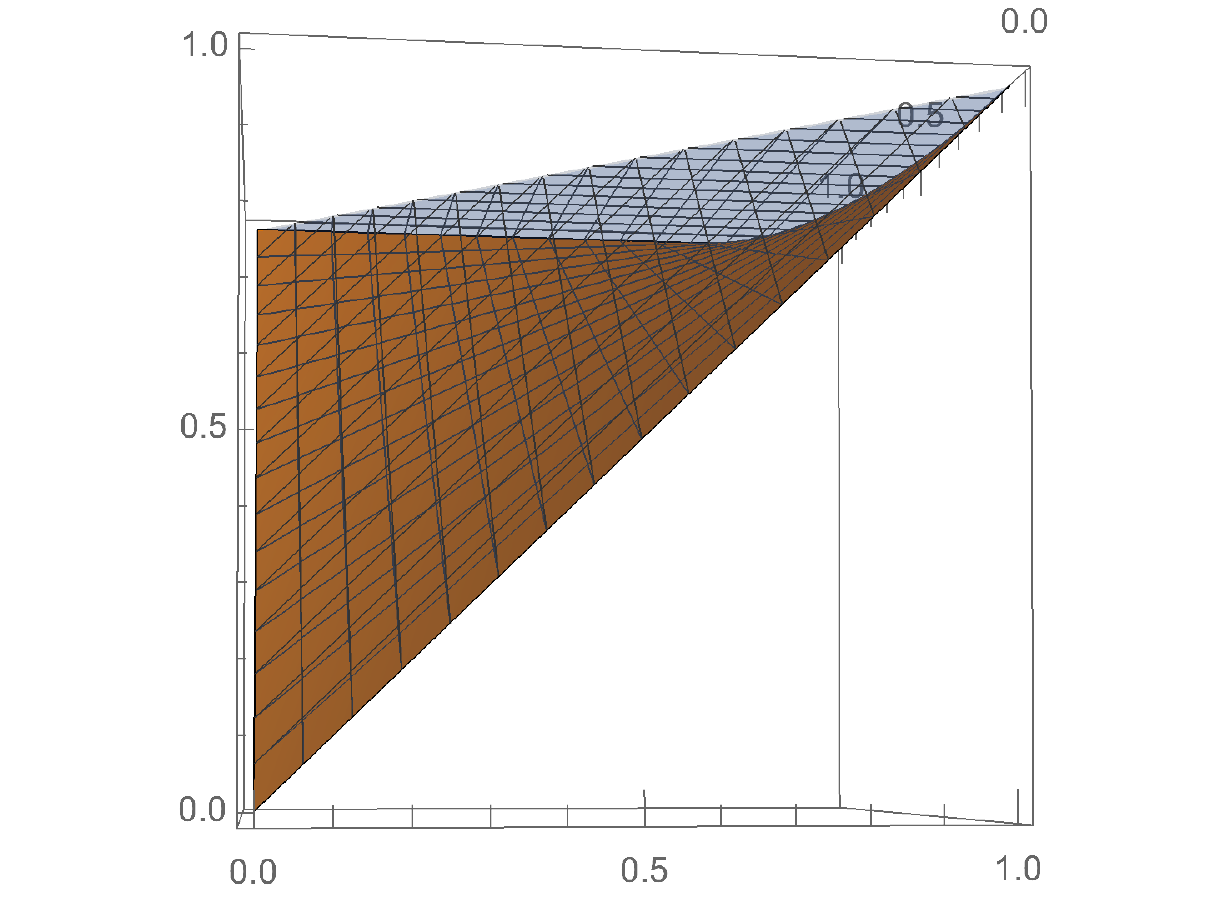}%
	\includegraphics[height=3.5cm]{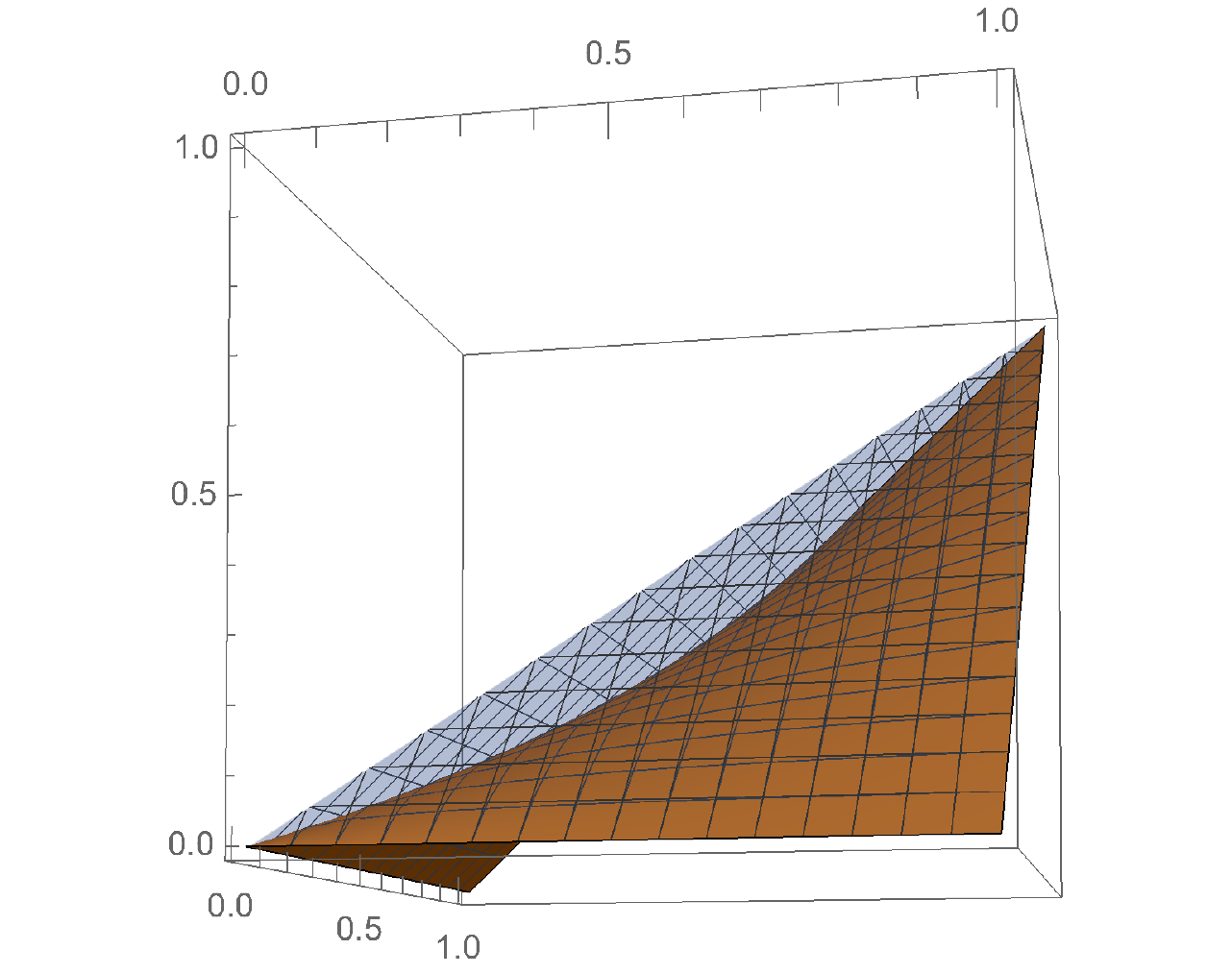}
	\caption{Concave envelopes for $x\cdot y$, $x+y-x\cdot y$ and $\tfrac{xy}{x+y-x\cdot y}$ for $0\leq x \leq 1$ and $0\leq y \leq 1$.} \label{fig:threeave}
\end{figure}

\subsection{A mixed integer convex approximation}\label{sec:mip}
Using the concave envelopes of the bivariate functions resulting from the series and parallel reductions, we can replace the corresponding constraint from model \eqref{eq:original_start}-\eqref{eq:original_finish} and obtain the following mixed-integer convex optimization approximation model:

\begin{subequations}\label{eq:convexrelax}
\begin{align}
	& \max\ R & \\
 & A X \leq b & \\
 Y_i &= p_i X_i, \quad \Omega_i=1,\quad \bar\Omega_i=1 &  &  &   i=1\ldots |E| \\
 Y_{|E|+i} &\leq Y_j (1-U_k)+ (1-U_j) Y_k + U_j U_k &  & &  i=1\ldots |E|-1: G_i = G_j +_P G_k \label{eq:final:parallel1}\\
 Y_{|E|+i} &\leq Y_j (1-L_k) +(1-L_j) Y_k  + L_j L_k&  & &   i=1\ldots |E|-1: G_i = G_j +_P G_k \label{eq:final:parallel2}\\
 \Omega_{|E|+i} &= 1,\quad \bar\Omega_{|E|+i} = \bar\Omega_{|E|+i-1}   & & &   i=1\ldots |E|-1: G_i = G_j +_P G_k \\
 Y_{|E|+i} &\leq \begin{cases} \frac{Y_j Y_k}{Y_j+Y_k - U_j Y_k} & \text{if } x/U_x \geq y/U_y  \\  \frac{Y_j Y_k}{Y_j+Y_k - Y_j U_k} & \text{if not} \end{cases} & &&  i=1\ldots |E|-1: G_i = G_j +_S G_k \label{eq:final:concave}\\
 \Omega_{|E|+i} &\leq Y_j (1-U_k)+ (1-U_j) Y_k + U_j U_k &&&  i=1\ldots |E|-1: G_i = G_j +_S G_k \\
 \Omega_{|E|+i} &\leq Y_j (1-L_k)+ (1-L_j) Y_k + L_j L_k &&&  i=1\ldots |E|-1: G_i = G_j +_S G_k \\
\bar\Omega_{|E|+i} &\leq U^{\bar\Omega}_{|E|+i-1}\cdot \Omega_{|E|+i} + \bar\Omega_{|E|+i-1}\cdot L^\Omega_{|E|+i} - U^{\bar\Omega}_{i-1}\cdot L^\Omega_i & && i=1\ldots |E|-1: G_i = G_j +_S G_k  \\
\bar\Omega_{|E|+i} &\leq L^{\bar\Omega}_{i-1}\cdot \Omega_i + \bar\Omega_{i-1}\cdot U^\Omega_i - L^{\bar\Omega}_{i-1}\cdot U^\Omega_i & && i=1\ldots |E|-1: G_i = G_j +_S G_k  \\
R &\leq U_{2|E|-1}\cdot \bar\Omega_{2|E|-1} + Y_z\cdot L^{\bar\Omega}_{2|E|-1} - U_{2|E|-1}\cdot L^{\bar\Omega}_{2|E|-1} & && \\
R &\leq L_{2|E|-1}\cdot \bar\Omega_{2|E|-1} + Y_z\cdot U^{\bar\Omega}_{2|E|-1} - L_{2|E|-1}\cdot U^{\bar\Omega}_{2|E|-1} & && \\
 Y_i&, \Omega_i, \bar{\Omega}_i\in[0,1]  & & &  i=1\ldots 2|E|-1\\ 
 X_i &\in \{0,1\}  &&&  i=1\ldots |E|
 \end{align}
\end{subequations}
where the constants $L_i$ and $U_i$ are valid lower and upper bounds for $Y_i$ and  $L^\Omega_i$, $U^\Omega_i$, $L^{\bar\Omega}_i$ and $U^{\bar\Omega}_i$ are valid lower and upper bounds for variables $\Omega_i$ and $\bar\Omega_i$, respectively. 
These upper bounds can be precomputed by assigning $U_i = p_i$ and $L_i=0$ for all $i\in 1\ldots |E|$ and then applying the corresponding functions $f_1$, $f_2$ or $f_3$ to these bounds for each series or parallel composition in $\mathcal{S}_G$.  

All constraints in previous model are linear, except for inequality~\eqref{eq:final:concave}. However, since $\ave{f}_3(x,y)$ is concave, we can enforce this nonlinear constraint with linear constraints given by its tangent hyperplane. Given a point $(x^*,y^*)$, we upper bound $\ave{f}_3(x,y)$ by the linear constraint $\ave{f}_3(x^*,y^*) + \frac{\partial\ave{f}_3}{\partial x}(x^*,y^*) \cdot (x-x^*) + \frac{\partial\ave{f}_3}{\partial y}(x^*,y^*) \cdot (y-y^*)$, which is 
\begin{align}
%	\bar{g}_3(x,y) &\leq \bar{g}_3(x^*,y^*) + \frac{\partial\bar{g}_3}{\partial x}(x^*,y^*) \cdot (x-x^*) + \frac{\partial\bar{g}_3}{\partial y}(x^*,y^*) \cdot (y-y^*)  \\
	\ave{f}_3(x,y)&\leq \begin{cases} \left(\frac{y^*}{x^*+y^*-U_x\cdot y^*}\right)^2 \cdot (1-U_x)\cdot x + \left(\frac{x^*}{x^*+y^*-U_xy^*}\right)^2 \cdot y & \text{if } x/U_x \geq y/U_y  \\
		\left(\frac{y^*}{x^*+y^*-x^*U_y}\right)^2 \cdot x + \left(\frac{x^*}{x^*+y^*-x^*U_y}\right)^2  \cdot (1-U_y) \cdot y & \text{if not} \end{cases} \label{eq:cuts}
\end{align}
These linear constraints can be added dynamically to optimization solvers during their optimization procedures.

\subsection{Computational experiments}\label{sec:results}

\subsubsection{Instances and implementation}
To test the effectiveness of our methodology, we generated random series-parallel graphs with $|E|=50$ and $|E|=100$ edges in the following way. The elementary reliabilities $p_e$ are generated uniformly at random between $0.9$ and $1.0$ for each edge $e$. To generate each instance, we start with a perfect bipartite matching.
%whose edges do not share a common node.
Clearly, the number of connected components in $G$ is initially $|E|$. Then, we iteratively select two components uniformly at random and connect them either in series or parallel (with equal probability), which diminishes the number of connected components in $G$ by one. 
The process is repeated until the resulting graph is connected. This procedure is detailed in Algorithm~\ref{alg:genGraphs}.

For the additional constraints (Eq.~\ref{eq:sideConstraints}), we impose a cardinality constraint that only a $\alpha$ fraction of the edges can be selected: $\sum_{e\in E} X_e \leq \alpha |E|$. Low values of $\alpha$ ($\alpha < 0.5$) lead to either infeasible problems or a reduced set of feasible solutions, which makes the problem easy to solve. Similarly, high values of $\alpha$ ($\alpha > 0.8$) encourage the solution to include most of the edges, also making the problem easy to solve. For these reasons, in our experiments, we use $\alpha=0.8$, which is a high value where our models behave very well, and $\alpha=0.6$, which yields the most challenging instances of our problem.

In our computational experiments, we compare two different configurations:
\begin{enumerate}
\item[Convex envelope cuts:] The model described in Section~\ref{sec:mip}, including the cuts from \eqref{eq:cuts} to approximate our concave envelope from Theorem~\ref{thm:main}.
%\item[Improved envelope cuts:] The previous model, including the improvements on the bounds provided by the branch-and-bound tree from Subsection~\ref{sec:dynamicCuts}.
\item[Without cuts:] The model from Section~\ref{sec:mip}, considering a simpler constraint to bound the nonlinear concave constraint~\eqref{eq:final:concave}, which we describe next.
\end{enumerate}
The second configuration is constructed to understand the effectiveness of the concave approximation provided in Theorem~\ref{thm:main} for series compositions. This simpler general approximation for $f_3(x,y)$ is given by constraints $f_3(x,y) \leq x$ and $f_3(x,y) \leq y$; these constraints are obtained when replacing either $x$ or $y$ in the denominator of $f_3(x,y)$ with 1 or by replacing $U_x$ or $U_y$ with 1 in $\ave{f}_3(x,y)$. This corresponds to the tangent hyperplanes of $\ave{f}_3(x,y)$ on $(x,y) = (0,U_y)$ and $(x,y)=(U_x,0)$.

\begin{algorithm}
\caption{Generation of random instances of size $k$}\label{alg:genGraphs}
\begin{algorithmic}
%\ForAll{$e\in E$}\State $p_e \gets U[0.9,1]$\EndFor
\State $E \gets \{e_1,\ldots e_k\}$ (disjoint edges) %single edges, not connected among them
\State $p_e \gets U[0.9,1]$ for all $e\in E$ \Comment{Random initial reliabilities}
\State $\mathcal{G} \gets E$
\While{$|ConnectedComponents(\mathcal{G})|> 1$}
\State $g_1,g_2 \gets$ random subgraphs from $\mathcal{G}$ (chosen equiprobable)
\State $u \gets U[0,1]$
\If {$u<0.5$} 
\State $g = g_1 +_P g_2$ \Comment{Parallel connection}
\Else
    \State $g = g_1 +_S g_2$ \Comment{Series connection}
\EndIf
\State $\mathcal{G} \gets \mathcal{G} \setminus \{g_1 \cup g_2\} \cup g$ \Comment{Replace $g_1$ and $g_2$ in $\mathcal{G}$ by $g$}
\EndWhile
\end{algorithmic}
\end{algorithm}

These models were implemented with Python 3.7 using the IBM\textsuperscript{\tiny\textregistered} Decision Optimization CPLEX\textsuperscript{\tiny\textregistered} Modeling for Python (DOcplex.MP) v2.11 of CPLEX Studio v12.10~\cite{docplex}.
All CPLEX parameters have their default values, and no cut manager was implemented for the additional cuts~\eqref{eq:cuts}. 
All computations were made on machines running Linux under x86\_64 architecture running in a single thread.

\subsubsection{Computational results} \label{sec:firstexperiments}
%Since the constraints of the approximated problem provide upper bounds on the value of the variables, we begin by validating the quality of the solution provided as an approximation of the true reliability of the resulting graph. In order to do that, in this first set of experiments, we omit the Combinatorial Benders cut presented in Subsection~\ref{sec:trueReliability} and we compare the results of the three configurations.

\begin{figure}[htbp]
	\centering
	\includegraphics[height=4.5cm]{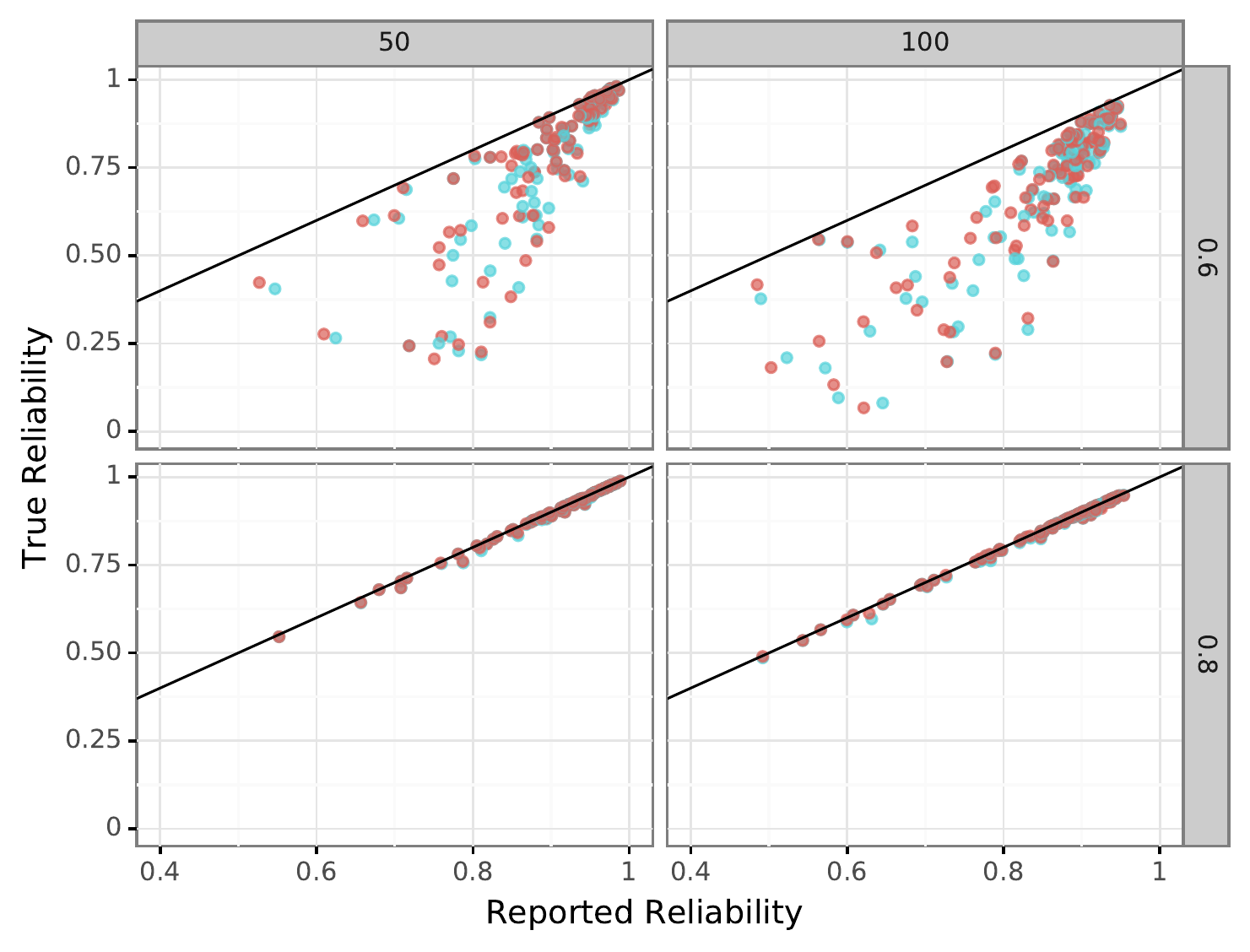}~%
	\includegraphics[height=4.5cm]{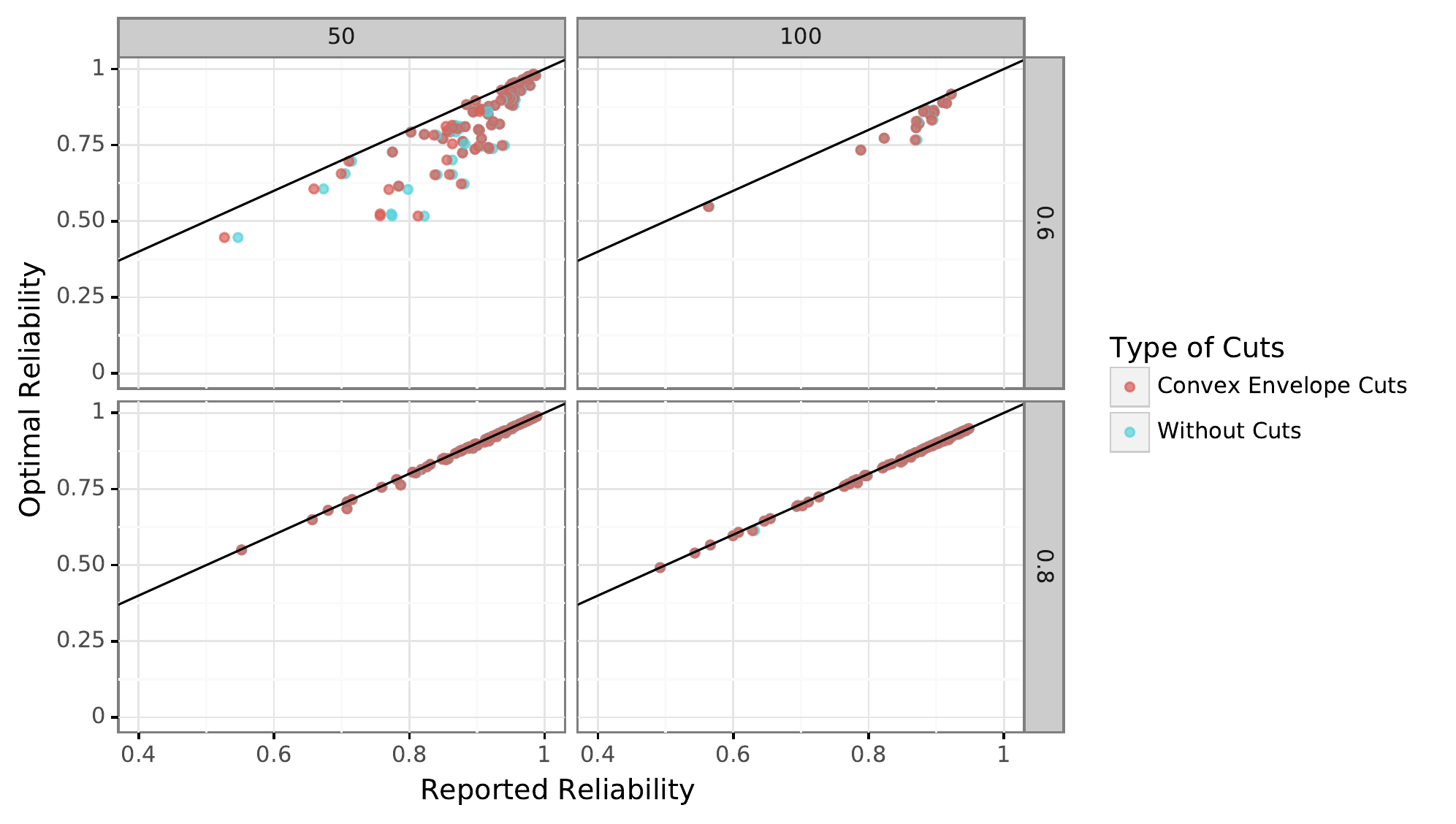}
	\caption{Comparison between the reported objective value and the true reliability and the optimal reliability of the resulting network}\label{fig:results}
\end{figure}

Figure~\ref{fig:results} (left) compares the objective value reported by our model (x-axis) for each instance versus the true reliability of the resulting solution for the two configurations described. Similarly, Figure~\ref{fig:results} (right) compares the objective value reported versus the true optimal solution of the problem, for the cases where the latter is known.\footnote{True optimal solutions are obtained using the results of the following sections but presented here for illustrative purposes.} Each subfigure is divided in four, depending on the number of edges (horizontal) and the budget (vertical) of the problem.

In these experiments, we can see that both configurations behave extremely well for a budget of $\alpha=0.8$: the resulting objective value of the model is not only very close to the real reliability of the solution but also very close to the true optimal solution of the problem. These relative errors are less than $1\%$ for all instances and configurations evaluated. 

For $\alpha=0.6$, this behavior changes drastically. The reported objective value of the problem (an upper bound on the reliability) differs considerably for many instances in both configurations. Moreover, it seems that this effect is more pronounced for lower reliabilities. A similar effect is observed when we compare the reported reliability with the optimal reliability of the problem.\\ 

These experiments show that, in challenging instances, the overestimation of the reliability provided by the concave envelopes alone is not good enough for estimating the true reliability of the resulting graphs, leading to suboptimal solutions for our problem. Moreover, they indicate that little is gained from using cutting planes to iteratively approximate $\ave{f}_3$, in comparison to its simple approximation based on two tangent hyperplanes, at least in this setting. In the next section, we study different improvements to avoid these issues and where the cutting planes associated with $\ave{f}_3$ have considerably more impact. These improvements lead the model to the true optimal solution in most instances.

\subsection{Further improvements to the model}\label{improvements}

\subsubsection{Providing the true reliability of the solution}\label{sec:trueReliability}
The previous model considers overestimators of the nonlinear functions, which can lead to overestimations of the true reliability of the resulting network. However, based on the selected edges of the graph (the values of the $X$ variables), we can compute the ``true'' reliability of the resulting graph, and we can improve our model by introducing \emph{combinatorial Benders cuts}~\cite{codato2006combinatorial} to avoid this problem.

Specifically, given a feasible solution $\hat{X}$ we can compute the resulting reliability $\mathcal{R}_{\hat{X}}=\mathcal{R}(p_1\hat{X}_1,\ldots,p_{|E|}\hat{X}_{|E|})$ in linear time using Algorithm~\ref{alg:computeReliability} and obtain the following valid inequality---a combinatorial Benders cut associated with $\hat{X}$---that bounds the value of $R$
\begin{equation}
    R \leq \mathcal{R}_{\hat{X}} + \sum_{e\in E : \hat{X}_e=1} (1-X_e) + \sum_{e\in E : \hat{X}_e=0} X_e 
\end{equation}
This constraint implies that if $\hat{X}$ is the optimal solution to the problem, then $R=\mathcal{R}_{\hat{X}}$. For other feasible solutions $X^*\neq \hat{X}$, this constraint is trivially satisfied because $R \leq 1$. 
%
% We add these cuts dynamically whenever a feasible integer solution has been found in the model as \emph{lazy cuts}, which can be implemented in most of the current linear optimization solvers.
%
We can strengthen this inequality using the following observation. Note that the all-terminal reliability of a graph $\mathcal{R}(p_1X_1,p_2X_2,\ldots ,p_{|E|} X_{|E|})$ only increases when a new edge is added. 
%Hence, this upper bound on the reliability comes from edges not included in the current solution $\hat{X}$. In other words, this bound is valid for all solution $X$ not including the edges omitted in $\hat{X}$. 
In other words, the reliability of any solution $X$ such that $\hat{X}_e = 0 \Rightarrow X_e = 0$ must be smaller than that of $\hat{X}$. Therefore, the combinatorial cuts can be strengthened to
\begin{equation}
    R \leq \mathcal{R}_{\hat{X}} + \sum_{e\in E : \hat{X}_e=0} X_e. \label{eq:CombinatorialCut}
\end{equation}

By the same reasoning, if a feasible solution of the problem contains all edges selected in $\hat{X}$, then its reliability will be at least $\mathcal{R}_{\hat{X}}$. Hence, we can enforce this lower bound for the reliability by adding the following constraint each time that an incumbent solution $\hat{X}$ has been found:
\begin{equation}
    R \geq \mathcal{R}_{\hat{X}} - \sum_{e\in E : \hat{X}_e=1} (1-X_e). \label{eq:CombinatorialCutLB}
\end{equation}

% Note that constraints \eqref{eq:CombinatorialCut} and \eqref{eq:CombinatorialCutLB} are valid because the reliability of the graph $\mathcal{R}$ as a function of $X$ is monotone, in the sense that if $X^1 \leq X^2$ then $\mathcal{R}(X^1) \leq \mathcal{R}(X^2)$. 

This reasoning can also be expanded to other variables. The monotonicity exhibited by the reliability function also holds for $Y_i$, $\Omega_i$ and $\bar{\Omega}_i$ as functions of $X$, because all three functions $f_1(x,y)$, $f_2(x,y)$ and $f_3(x,y)$ are nondecreasing in both dimensions. Therefore, given a solution $\hat{X}$, we can apply Algorithm~\ref{alg:computeReliability}  to compute the values  of variables $Y$, $\Omega$ and $\bar{\Omega}$ associated with this solution and derive similar cuts for all variables of the problem each time that a new incumbent solution $\hat{X}$ is found during the branch-and-bound process.

%\todo[inline]{Podría ser mejor $ R \leq \mathcal{R}_{\hat{x}} + \sum_{e\in E : \hat{x}_e=0} p_e x_e$ ?  Dado que la reliability no puede aumentar en mas de $p_e$ si se incluye un nuevo arco? O incluso calcular $\mathcal{R}_{\hat{x}+x_e}$ para cada $e$ y poner ese factor? ¿Es una cota válida? Creo que no. ¿Es submodular?.  Respuesta: No es una cota valida. }

\begin{remark}
Including these inequalities during a branch-and-bound procedure ensures that we obtain an optimal solution to the original problem \eqref{eq:mainmodel} at the end of the optimization routine, unless a time limit is reached. Whenever a feasible solution $\hat{X}$ to the relaxation is found, the combinatorial Benders cut ensures that $R=\mathcal{R}_{\hat{X}}$ if $\hat{X}$ is reported optimal. The resulting optimization routine might be impractical though, as it might resort to a costly enumeration if the relaxation \eqref{eq:convexrelax} is not tight enough. The next improvements aim at better approximating the problem and making the tree-search more efficient.
\end{remark}

\subsubsection{Improving inequalities on the branch-and-bound tree}\label{sec:dynamicCuts}
Our proposed model considers the best possible concave overestimators for each function. These envelopes depend on the lower and upper bounds for each variable, and even if these bounds are tight, the envelopes may not provide a tight approximation of the functions in the whole feasible region.
% For example, depending on the additional constraints of the problem, in most cases the best lower bound for $Y_i$ is $L_i=0$ for $i=1\ldots |E|$.
Nevertheless, the branch-and-bound procedure of MILP solvers is based on imposing new bounds on the variables while branching and thus improving the relaxations in each node. These bounds are only valid locally, but we can use them to obtain the concave envelopes based on these new bounds, which yields better local approximations of each function. 

If the branch-and-bound process fixes a variable $X_e$ to 0 or 1, we can propagate this decision to improve the lower and upper bounds for all variables corresponding to reductions that include this edge. 
%We recompute the concave envelopes using these new bounds as mentioned above. 
% Note that these new constraints are only valid for the subtree of the branch-and-bound fixing this variable, so they should be added as \emph{local cuts} of the problem.
%
%In particular, if the improved bounds fix the value of a variable (
For instance, if the fixed variable is $X_j=1$, then $Y_j=p_j$. Thus, for the case of a parallel reduction, we can improve \eqref{eq:final:parallel1} and \eqref{eq:final:parallel2} by adding the local linear constraint $Y_i = 1-(1-p_j)(1-Y_k)$. In the case of a series reduction, we can improve \eqref{eq:final:concave} and its associated cuts \eqref{eq:cuts} by adding the inequality 
\begin{equation}
 Y_i \leq \left(\frac{p_j}{Y_k^*+p_j-Y^*_k\cdot p_j}\right)^2 \cdot (Y_k-Y_k^*) + \frac{p_j Y_k^*}{Y_k^*+p_j-Y^*_k\cdot p_j} \label{eq:convcutImproved}
\end{equation}
where $Y_k^*$ is the current solution of variable $Y_k$ at the node of the branch-and-bound tree. This is possible because the function $f_3(p,y)$ is concave on $y$ for any fixed value of $p$, so \eqref{eq:convcutImproved} corresponds to the gradient of this function on $y=Y_k^*$. Similar improvements can be included for the remaining equations involving $\Omega_i$ and $\bar{\Omega}_i$.

Note that our concave envelopes for series reduction (Theorem~\ref{thm:main}) only apply when the lower bounds are equal to $0$, so this approximation cannot be improved if the lower bounds are improved. However, we can still add linear constraints in this case: since $Y_i=f_3(Y_j,Y_k)$ is increasing and concave for a fixed $Y_j$ or $Y_k$, we can overestimate this function by two hyperplanes tangent to the point $(L_j,L_k)$.
\begin{align*}
    Y_i &\leq \left.\frac{\partial f_3}{\partial x}\right|_{(L_j,L_k)} \cdot (Y_j-L_j) + \left.\frac{\partial f_3}{\partial y}\right|_{(U_j,L_k)} \cdot (Y_k-L_k) + f_3(L_j,L_k) \\ 
    &= \frac{L_k^2}{(L_j + L_k - L_j L_k)^2}\cdot (Y_j-L_j) + \frac{U_j^2}{(U_j + L_k - U_j L_k)^2} \cdot(Y_k-L_k) + \frac{L_j L_k}{L_j+L_k-L_jL_k}
\end{align*}
To see that these cuts are valid, note that function $f_3(x,y)$ is concave on $x$ for a fixed $y$, so in particular, this is a valid upper bound for $y=L_k$. On the other hand, the partial derivative $\frac{\partial f_3}{\partial y}$ is increasing with respect to $x$, attaining its maximum value on $x=U_j$. Since $f_3$ is also concave for a fixed $x$, then it is a valid bound for $x=U_j$ and then for all $(x,y)\in [L_j,U_j]\times [L_k,U_k]$.

Interchanging the roles of $x$ and $y$ in $f_3(x,y)$, we can also bound $Y_i$ for a series composition by
\begin{align*}
    Y_i &\leq \left.\frac{\partial f_3}{\partial x}\right|_{(L_j,U_k)} \cdot (Y_j-L_j) + \left.\frac{\partial f_3}{\partial y}\right|_{(L_j,L_k)} \cdot (Y_k-L_k) + f_3(L_j,L_k) \\ 
    &= \frac{U_k^2}{(L_j + U_k - L_j U_k)^2}\cdot (Y_j-L_j) + \frac{L_j^2}{(L_j + L_k - L_j L_k)^2}\cdot (Y_k-L_k) + \frac{L_j L_k}{L_j+L_k-L_jL_k}
\end{align*}
See Figure~\ref{fig:envelopeImproved} for an example on how these hyperplanes improve the overestimation of $f_3$ on $(L_j,L_k)$.
% \todo[inline]{GM:Esta figura no se ve muy bien. Quizas cambiar el ángulo y/o los colores/transparencia. EM: Agregue otra. ¿se ve mejor ahí? Si}

\begin{figure}[tbp]
    \centering
	\includegraphics[height=3.5cm]{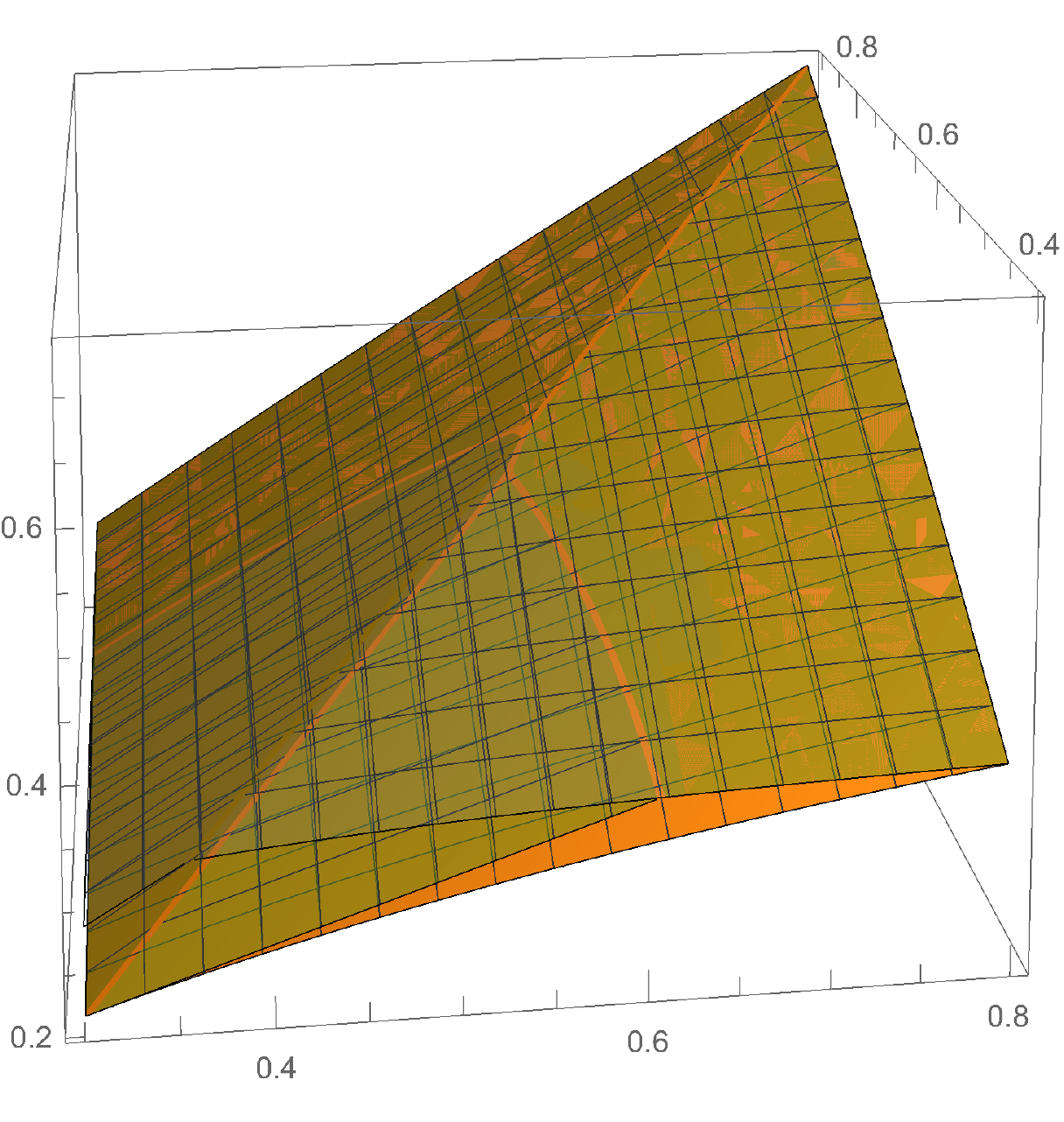} \hspace{1cm}%
	\includegraphics[height=3.5cm]{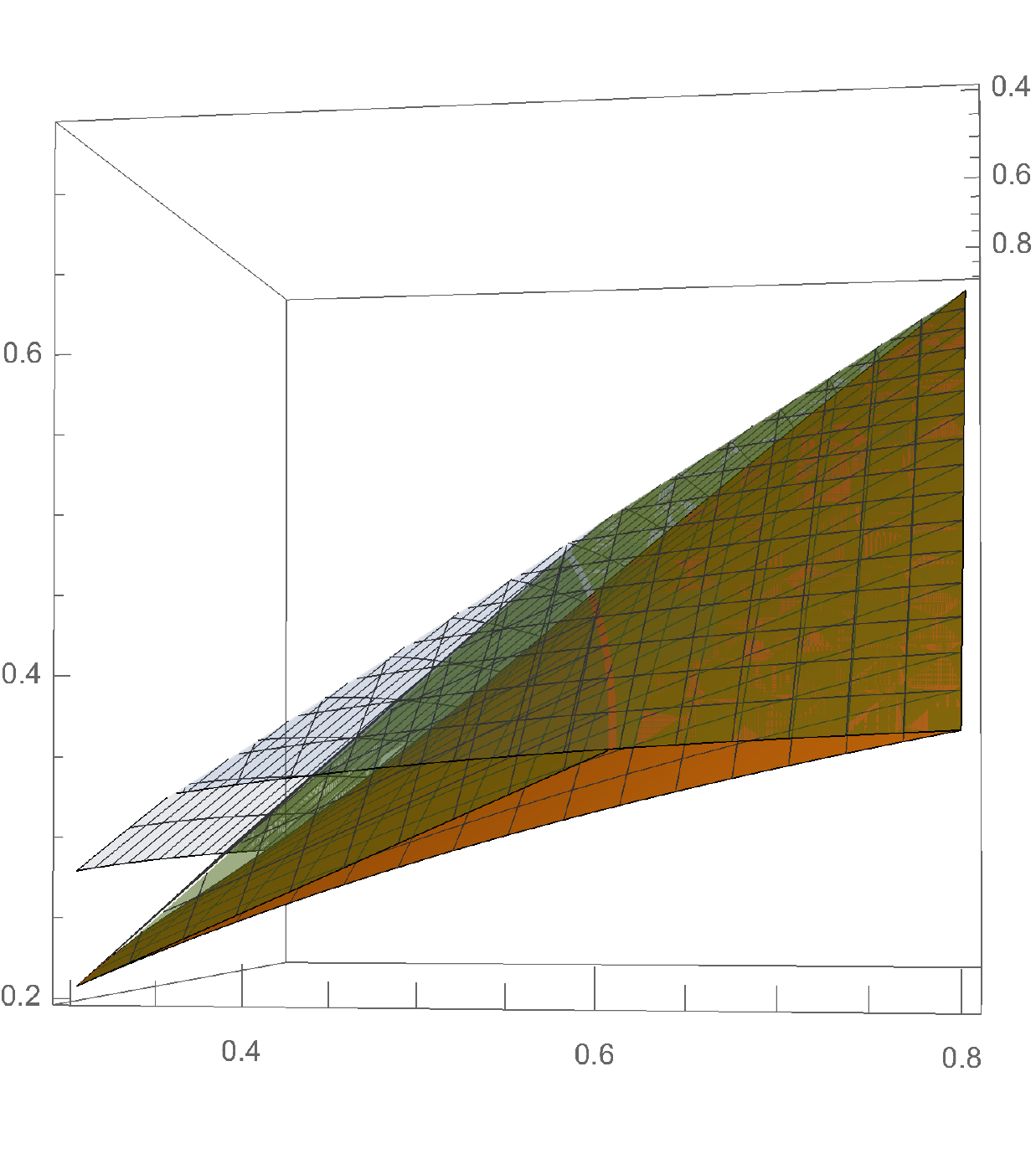}
	\caption{Function $\tfrac{xy}{x+y-x\cdot y}$ for $0.3\leq x \leq 0.8$ and $0.4\leq y \leq 0.9$ (in orange), its concave envelope over $[0,0.8]\times [0,0.9]$ (in blue) and the additional tangent hyperplanes at $(0.3,0,4)$ (in green)}\label{fig:envelopeImproved}
\end{figure}

\subsection{Computational experiments for the true optimal solution for the problem}

In this second set of experiments, we now include the combinatorial Benders cut \eqref{eq:CombinatorialCut} to ensure that the optimal solution provides the true reliability for the problem. We include these cuts for the previous two configurations \textbf{convex envelope cuts} and \textbf{without cuts}, and we add a third configuration:
\begin{description}
\item[Improved envelope cuts:] The model described in Section~\ref{sec:mip}, including the improvements based on the local bounds provided by the branch-and-bound tree (see Subsection~\ref{sec:dynamicCuts}).
\end{description}

Combinatorial cuts for computing the exact reliability (\S\ref{sec:trueReliability}) are implemented as \texttt{LazyContraintCallback}, and improved cuts in branch-and-bound (\S\ref{sec:dynamicCuts}) are implemented as \texttt{UserCutCallback}. For this set of experiments, we set a time limit of 3 hours for each problem.

Additionally, to benchmark our models with other solvers, we solve the original model \eqref{eq:original_start}-\eqref{eq:original_finish} using the MINLP solver SCIP v7.02~\cite{scip} compiled with the parameters for better performance for this kind of nonlinear nonconvex problem.

\begin{figure}[htbp]
	\centering
	\includegraphics[height=5.5cm]{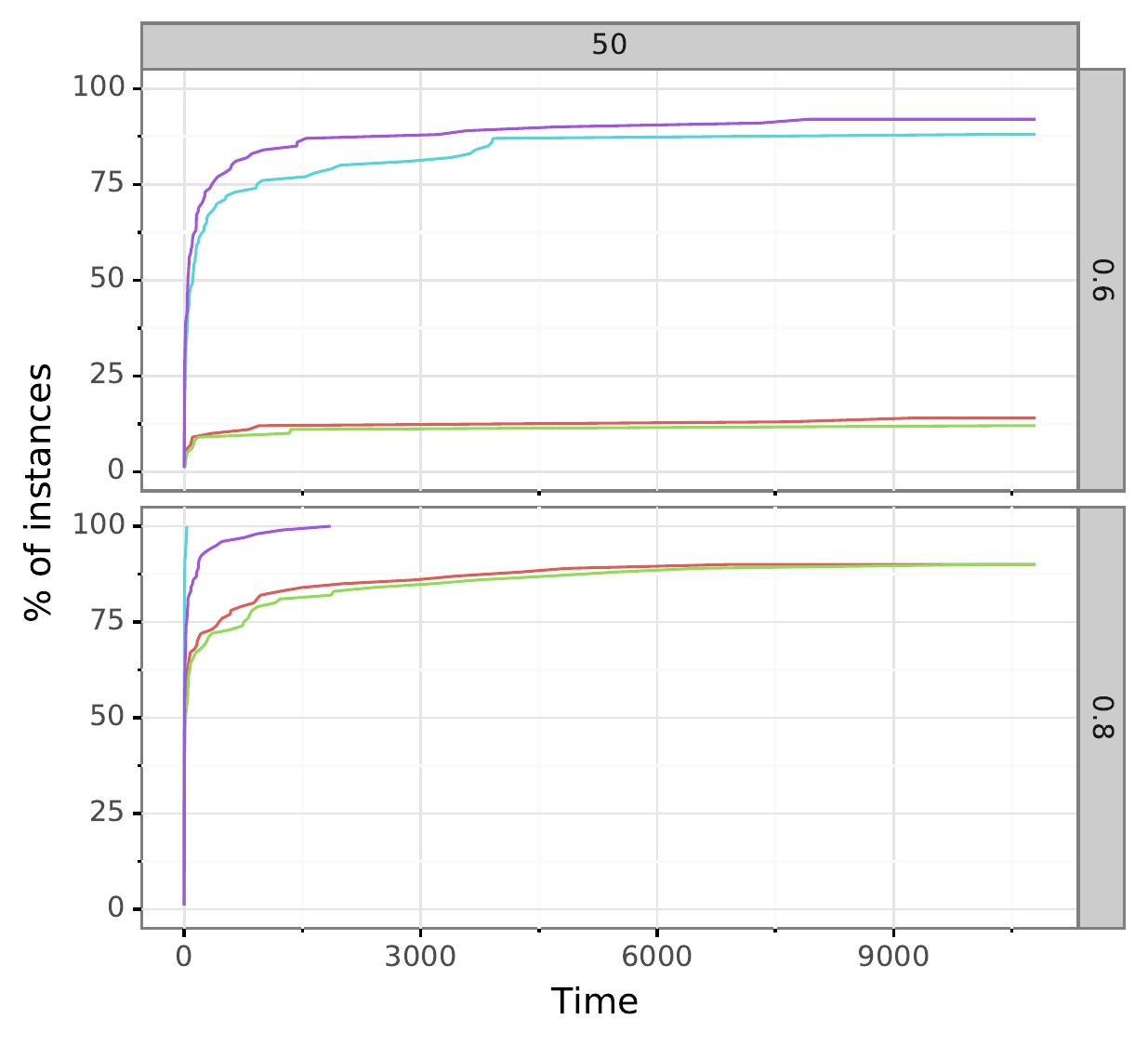}~%
	\includegraphics[height=5.5cm]{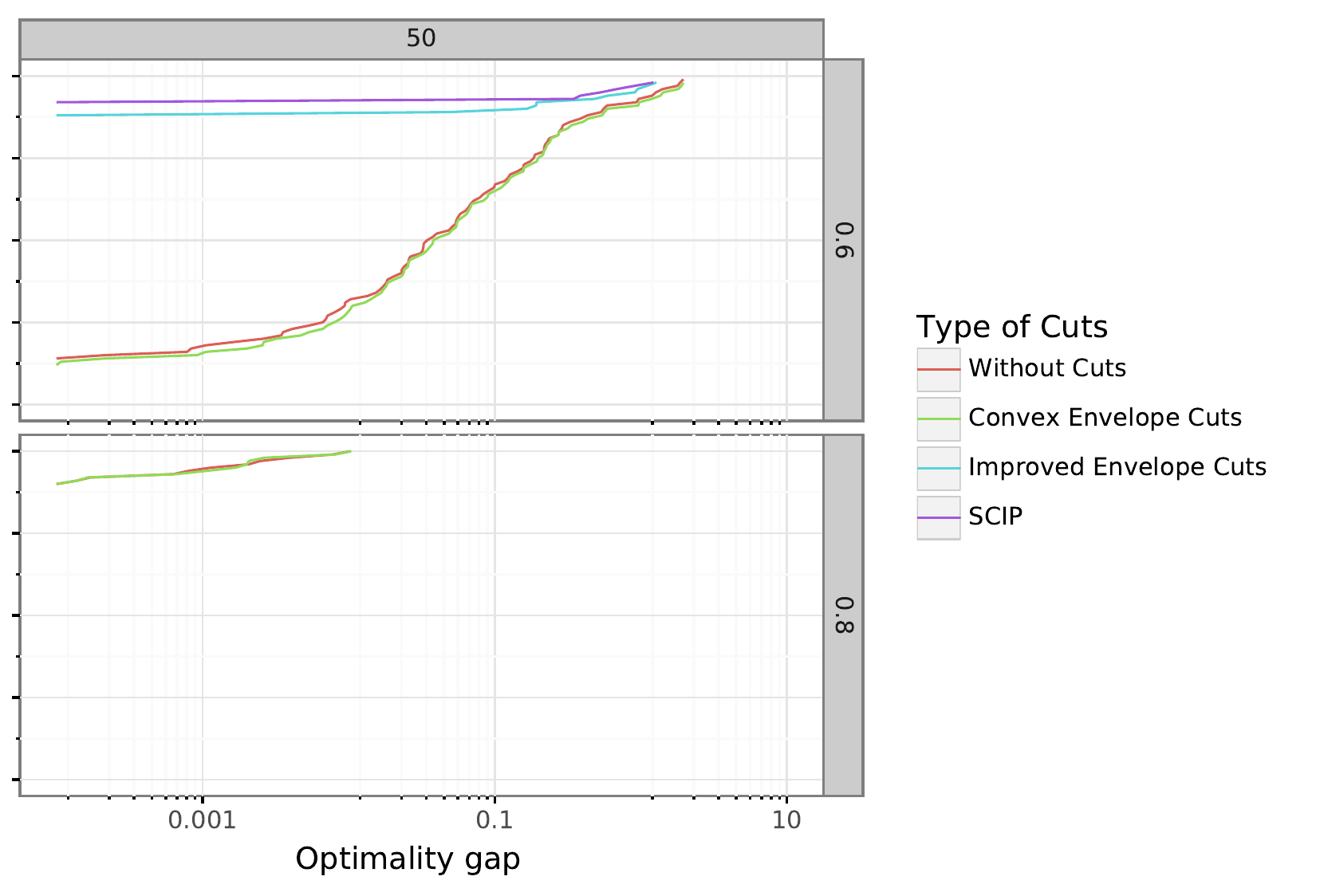}
	\caption{Performance profiles of the different configurations ($|E|=50$)}\label{fig:resultsReal50}
\end{figure}

Figure~\ref{fig:resultsReal50}  shows the performance profiles of the different configurations for instances with $|E|=50$ edges. The figure on the left shows the percentage of instances solved up to optimality before a given time ($x$-axis). For the instances that are not solved within the time limit, the figure on the right shows the percentage of instances attaining a given optimality gap (in log scale). 

Let us first analyze the instances for $\alpha=0.8$.  As shown in previous experiments, for these cases, all methods provide a good approximation of the true and optimal reliabilities. Therefore, the configurations without the improved envelope cuts behave well, solving 90\% the instances within the time limit of 3 hours, and the unsolved instances present a small optimality gap---a 2\% gap in the worst case. However, note that there is a nonnegligible portion of instances that require more than an hour to solve; this indicates that the model is able to find a good solution but that it cannot quickly prove its optimality because it needs to visit a large branch-and-bound tree to discard all other potential solutions. On the other hand, the improved envelope cuts behave drastically differently, solving all instances in less than one minute. This can be explained by this configuration's ability to locally adapt concave envelopes during the branch-and-bound tree, providing better estimations and thus better bounds, which yield a smaller branch-and-bound tree. 

This better approximation of the nonconcave functions is even more relevant for $\alpha=0.6$. While the combinatorial Benders cuts help in fixing the mismatch between the reported reliability and the true reliability discussed in Section \ref{sec:firstexperiments}, the weak estimations provided by the global approximation of the functions prevent the solver from finding better solutions and/or proving optimality efficiently: only 14\% of the instances are solved within the time limit when the improved envelope cuts are not included. This number increases to  88\% when these cuts are included, with most of these instances being solved in just a few minutes.

%\todo[inline]{GM: Hay algo que me molesta de este último analisis. El problema de reported reliability vs true reliability lo debería arreglar los Combinatorial Cuts (que están en todas), no los Improved Envelope Cuts. Eso al menos es el mensaje que dimos. Creo que quedaría mejor algo como:\\
%\vskip .2cm
%This better approximation of the non-concave functions is even more relevant for $\alpha=0.6$. While the combinatorial Benders cuts help in fixing the mismatch between the reported reliability and the true reliability discussed in Section \ref{sec:firstexperiments}, the weak estimations provided by the global approximation of the functions prevent the solver from finding better solutions and/or proving optimality efficiently: only 14\% of the instances are solved within the time limit when the Improved Envelope Cuts are not included. This number increases to  88\% when these cuts are included, with most of these instances being solved in just a few minutes.
%}

\begin{figure}[htbp]
	\centering
	\includegraphics[height=5.5cm]{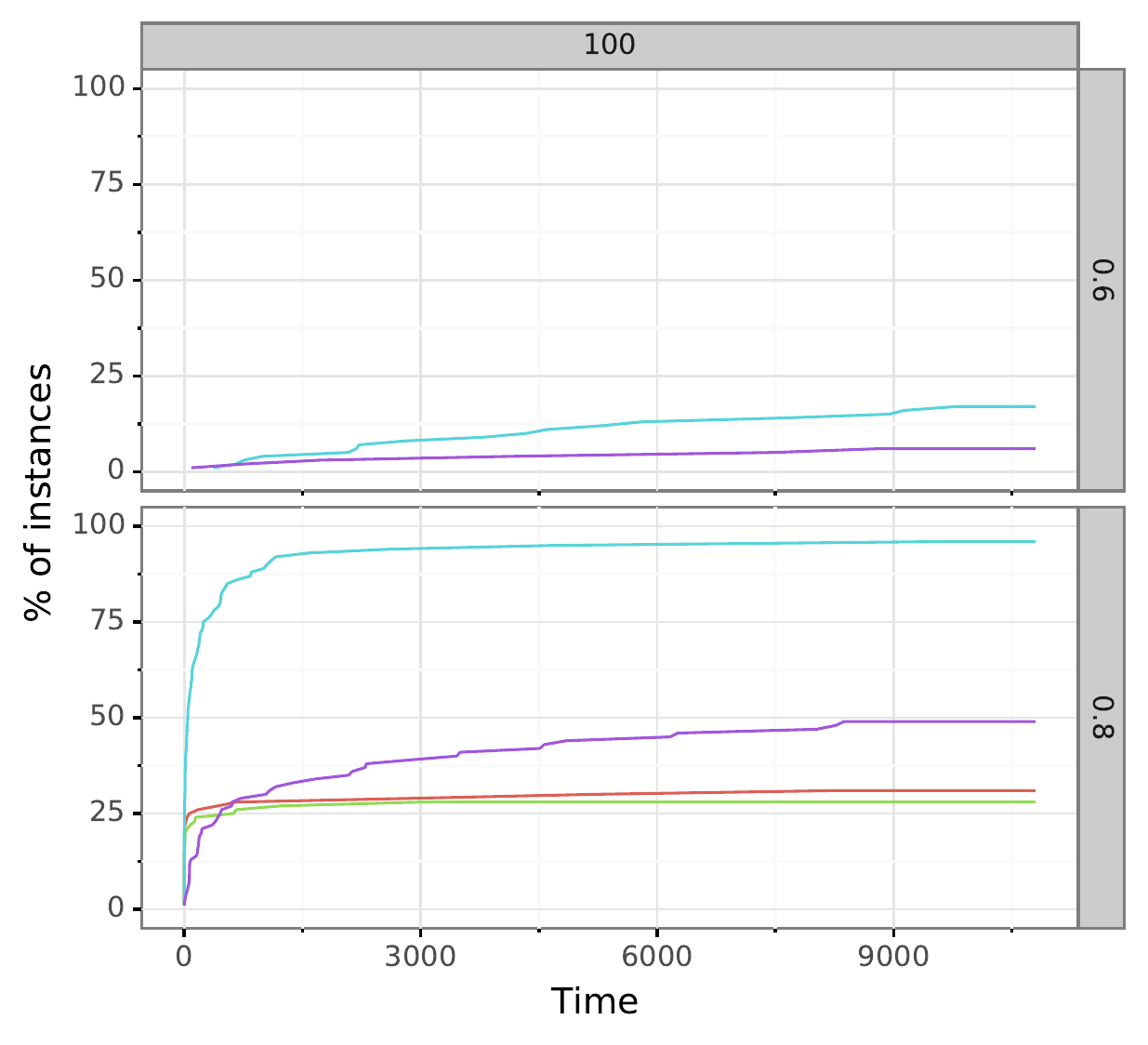}~%
	\includegraphics[height=5.5cm]{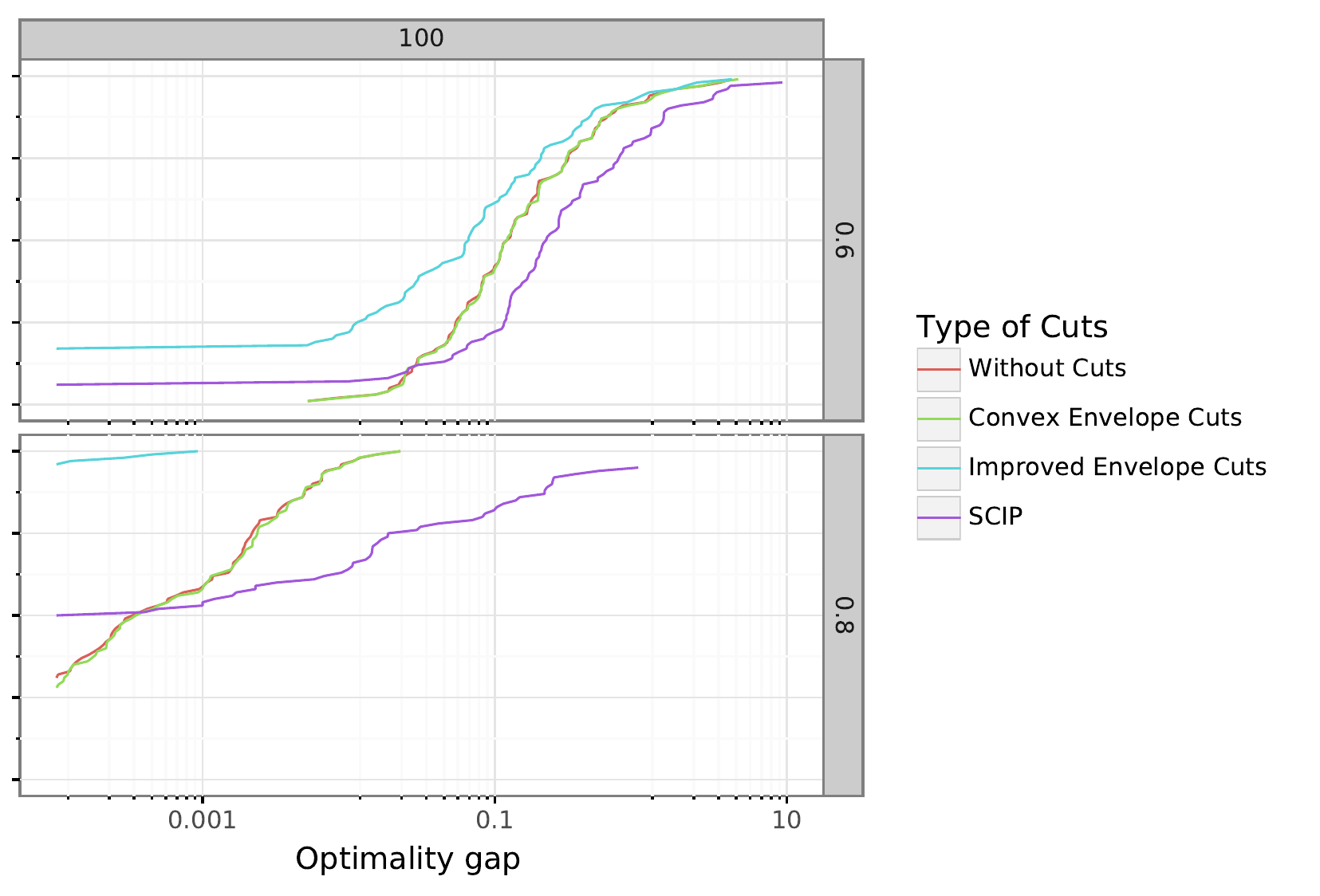}
	\caption{Performance profiles of the different configurations ($|E|=100$)}\label{fig:resultsReal100}
\end{figure}

The dominance of the improved envelope cuts also occurs for the instances with $|E|=100$ edges. We present these results in Figure~\ref{fig:resultsReal100}. We first note that the problem is considerably harder to solve. For instance, the configuration \emph{without cuts} can solve only 31\% of the instances for $\alpha=0.8$ within the time limit of three hours. Adding the convex envelopes of Theorem~\ref{thm:main} improves this metric, but only marginally. Nevertheless, the optimality gap obtained by these configurations is good, with more than 95\% of the instances finishing with a gap of less than 1\%. In the improved envelope cuts setting, 96\% of the instances are solved for $\alpha=0.8$, and the remaining instances finish with an optimality gap less than $0.1\%$.  

The problem becomes much more challenging for $\alpha=0.6$ and 100 edges. This is expected, because due to the sequential construction process of the graph, the differences between the nonlinear functions and their concave envelopes are propagated into the overall approximation quality and become more relevant when the number of steps in the construction sequence (i.e., the number of edges) is large. In fact, without considering the improved envelope cuts, the solver is not able to solve any instance, and the optimality gaps are substantial for most of the cases. The performance improves when including the improved envelope cuts, resulting in 17\% of the instances being solved and obtaining better optimality gaps for the unsolved instances. 
%This experiment allows to show the limits of the proposed approach for this problem.  

\subsubsection{Comparison with MINLP solver}

To benchmark the proposed model against current state-of-art solvers for nonlinear optimization models, we also solve the problem using the SCIP solver. SCIP is among the best general-purpose solvers that are able to deal with nonconvex constraints. It implements multiple bounding techniques, some of which are similar to those studied in this paper, along with spatial branch-and-bound based on linear outer-approximations of the problem. For more details, see~\cite{vigerske2018scip}. 

Figures~\ref{fig:resultsReal50}~and~\ref{fig:resultsReal100} show performance profiles of SCIP in comparison with our approach. We can see that for $|E|=50$, SCIP behaves in a similar way to the improved envelope cuts, being slightly slower for $\alpha=0.8$. However, for larger problems with $|E|=100$ edges, SCIP's performance decreases considerably, and it is outperformed by our proposed improved envelope cuts. SCIP can solve only half of the instances for $\alpha=0.8$ and only 6\% of the instances for $\alpha=0.6$, reaching the time limit with optimality gaps that are worse than the basic configuration without cuts, in most cases. 

\begin{figure}[htbp]
	\centering
	\includegraphics[width=.48\linewidth]{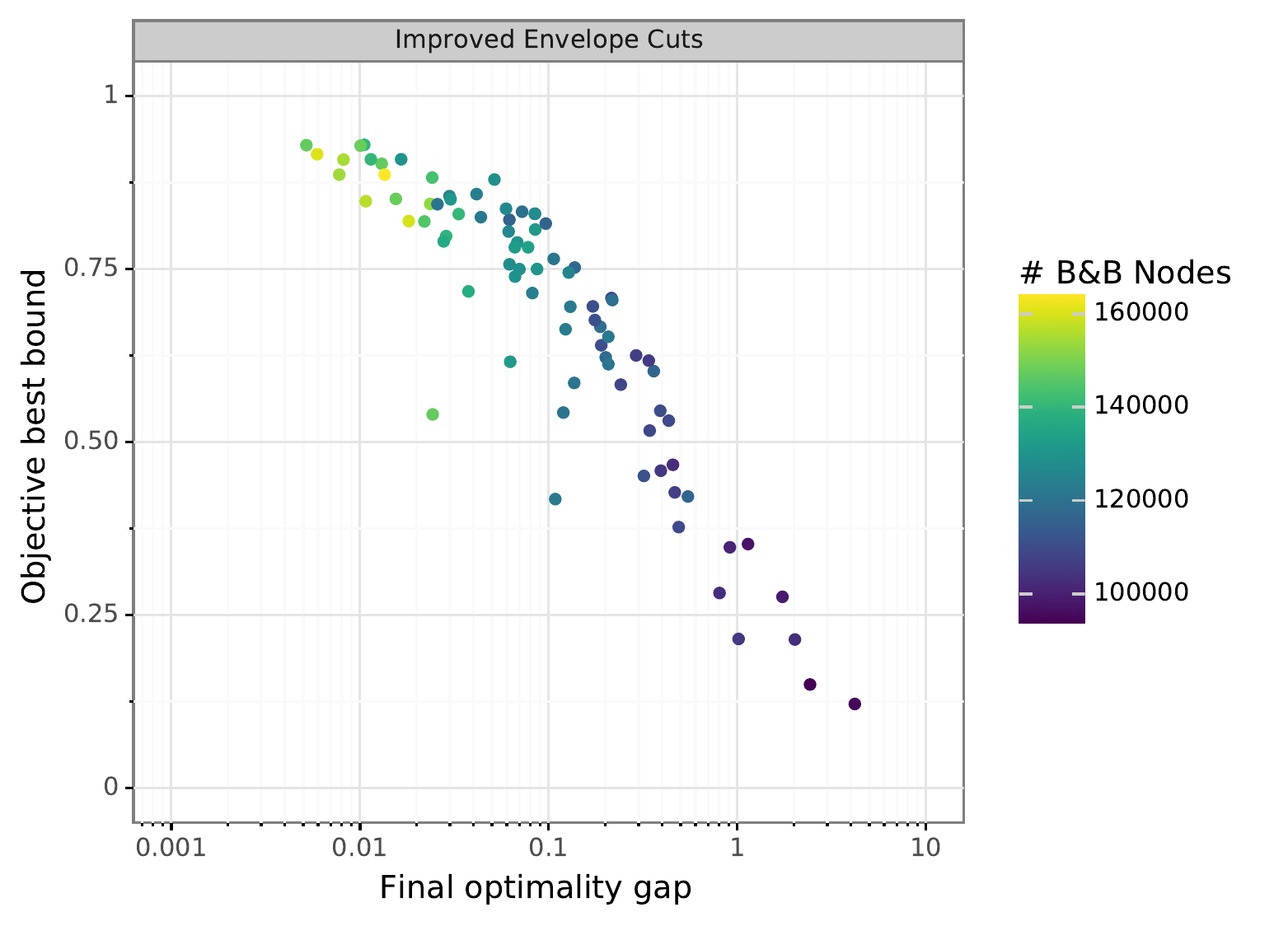}~%
	\includegraphics[width=.48\linewidth]{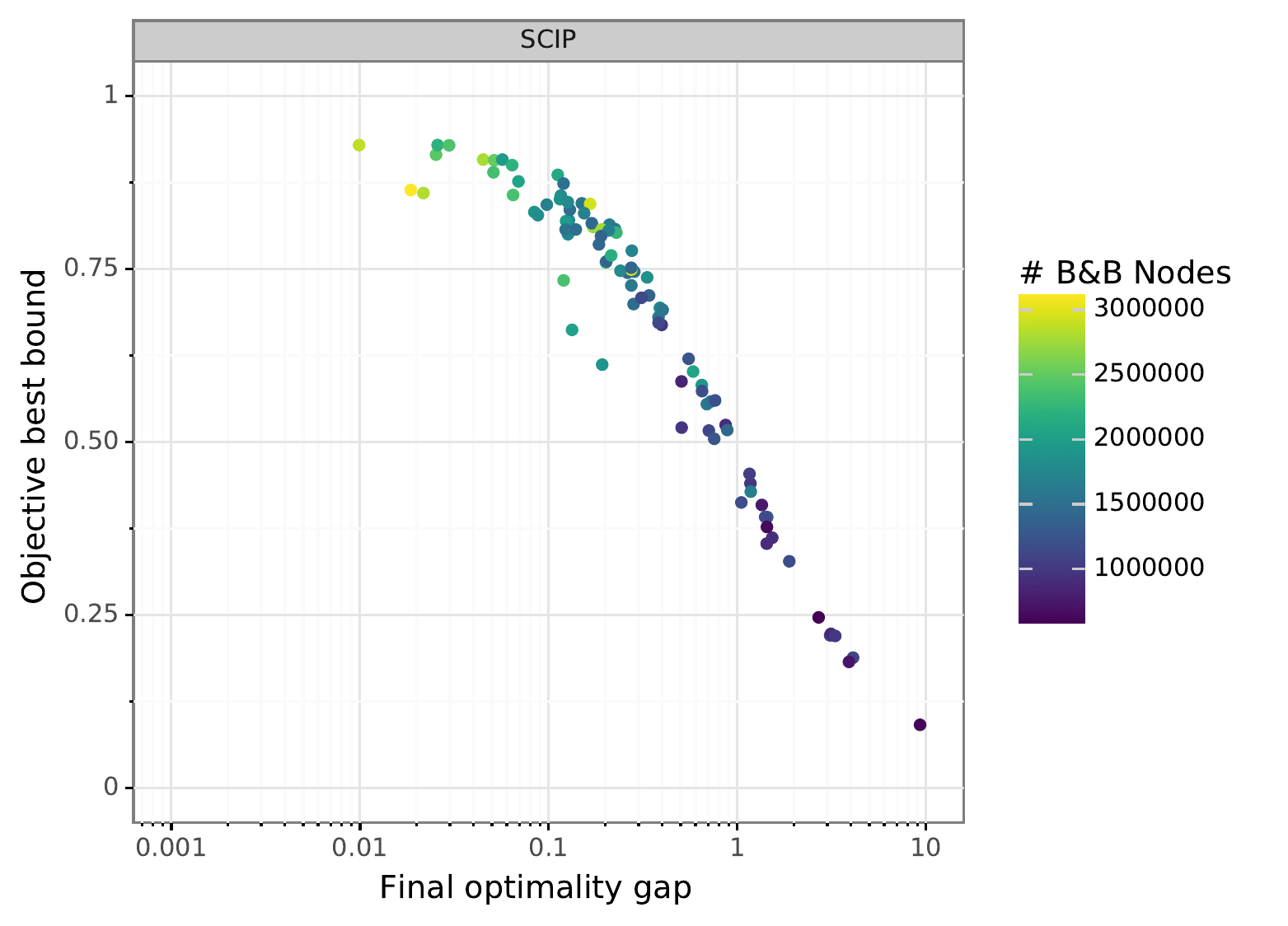}
	\caption{Final optimality gaps and number of nodes traversed in the branch-and-bound tree ($|E|=100$, $\alpha=0.6$)}\label{fig:resultNodes}
\end{figure}

Figure~\ref{fig:resultNodes} shows the optimality gaps (log scale) versus the best-bound objective value obtained by the improved envelope cuts setting and SCIP on the unsolved instances for the case of $100$ edges and $\alpha=0.6$. This figure indicates that the problems become harder when the reliability of the problem is lower: the optimality gaps are large when the objective bound is low. This can be explained because this is the region where the difference between the nonconvex functions and their concave envelopes differ the most (recall Figure~\ref{fig:results}), so the approximation is not sufficiently tight to lead the solver to prove the optimality of the solutions. This is also correlated with the number of nodes traversed in the branch-and-bound: the number of branch-and-bound nodes are smaller in harder instances, indicating that the subproblems at each node are harder to solve (probably because they include a larger number of cuts). Interestingly, similar behaviors also occur in SCIP, even if the latter is able to visit 10 times more nodes of the branch-and-bound tree.

%\todo[inline]{GM: En esto último tengo dudas. Por qué se dice que los problemas con bajo reliability es la región donde el concave envelope es más distinto a la función? No veo bien de donde sale esa conclusión. El comentario que sigue, donde se dice que se agregaron muchos cortes, me hace mucho sentido, pero esto no estaría diciendo nada sobre la calidad de los envelopes.}

% We observe that previous solutions are optimal for an approximation of the problem. To evaluate its global optimality, we found the true optimal value only for the case of $|E|=50$ edges using a brute-force algorithm. 
% Figure~\ref{fig:results2} shows the comparison of the true reliability of the resulting solutions versus the optimal reliability of the problem.  It is observed that the solutions including the concave envelope provides a very good approximation of the optimal value. In fact, in almost all cases, the relative error of the obtained solution is less than $1\%$ for $\alpha=60\%$ and less than $0.1\%$ for $\alpha=80\%$. It is also observed that by not adding these cuts the obtained solutions become worse, obtaining a relative error higher than $10\%$ for $\alpha=0.6$ in most  cases. 

\section{Conclusions and further extensions}\label{sec:conclusions} 

We provide an optimization framework to solve network design problems for maximizing the all-terminal reliability problem on series-parallel graphs when failure probabilities are independent but not identical. Our approach exploits the use of concave envelopes of the nonconcave functions that can be implemented successfully using current optimization solvers, something that has not been explored thus far in this context.

The special properties of the functions that appear in reliability optimization allow us to derive envelopes that can be refined and exploited in the solution process. Computational experiments show that it is highly beneficial to perform such refinements of the concave envelopes along the branch-and-bound process and thus provide better local approximations for the nonlinear functions. If this is not done, the solver faces difficulties in obtaining good solutions or proving optimality.  

These techniques can be extended to more general contexts of network reliability optimization. For example, similar ideas can be used for $K$-terminal reliabilities, where the functions associated to other reliability-preserving reductions (see \cite{Satyanarayana}) could also be approximated by their concave envelopes in a similar way. However, these concave envelopes are not known and can be difficult to find in closed form. Therefore, developing new techniques, such as those presented in~\cite{barrera2021convex}, that can handle these functions is a promising future direction that can considerably widen the applicability of our proposed framework.  Additionally, the use of convex/concave envelopes for reliability optimization can also be applied to more general families of graphs. In fact, the reductions discussed in this paper apply to any graph and allow us to reduce the size of the problem. The smaller problem can be solved using other optimization techniques such as sample average approximation. This appears very promising, in particular for graphs with small treewidth, as recently discussed in~\cite{goharshady2020efficient}.

\bibliographystyle{amsplain}      % mathematics and physical sciences
\bibliography{biblio}
\end{document}